\numberwithin{equation}{section}
\newcommand{\beq}{\begin{equation}}
\newcommand{\eeq}{\end{equation}}
\newcommand{\beqs}{\begin{eqnarray*}}
\newcommand{\eeqs}{\end{eqnarray*}}
\newcommand{\beqn}{\begin{eqnarray}}
\newcommand{\eeqn}{\end{eqnarray}}
\newcommand{\beqa}{\begin{array}}
\newcommand{\eeqa}{\end{array}}
\def\lra{\longrightarrow}
\def\p{\prime}
\def\bc{\begin{center}}
\def\ec{\end{center}}
\def\p{\partial}
\def\begeq{\begin{equation}}
\def\endeq{\end{equation}}
\def\and{\quad{\rm and}\quad}
\let\lra=\longrightarrow
\def\mapright\#1{\,\smash{\mathop{\lra}\limits^{\#1}}\,}
\newtheorem{prop}{Proposition}[section]
\newtheorem{theo}[prop]{Theorem}
\newtheorem{lem}[prop]{Lemma}
\newtheorem{cor}[prop]{Corollary}
\newtheorem{rem}[prop]{Remark}
\title  {Stability on  K\"ahler-Ricci flow,  I}
\author { Xiaohua Zhu}
 \subjclass {Primary: 53C25; Secondary:  53C55,
 58E11}
\keywords { K\"ahler-Ricci flow, K\"ahler-Einstein metric,
K\"ahler-Ricci soliton}
\address{ Xiaohua Zhu\\Department of Mathematics, Peking University,
Beijing, 100871, China\\
 xhzhu@math.pku.edu.cn}
\thanks {  Partially supported by  NSF10425102 in China.}
\begin{document}
\bibliographystyle{plain}

\begin{abstract} In this paper, we prove that K\"ahler-Ricci flow
converges to a K\"ahler-Einstein metric (or a K\"ahler-Ricci
soliton) in the sense of Cheeger-Gromov as long as an initial
K\"ahler metric is very closed to $g_{KE}$ (or $g_{KS}$) if a
compact K\"ahler manifold with $c_1(M)>0$  admits a K\"ahler
Einstein metric $g_{KE}$  (or a K\"ahler-Ricci soliton $g_{KS}$).
The result  improves Main Theorem in [TZ3] in the sense of
stability of K\"ahler-Ricci flow.
\end{abstract}
\maketitle

\setcounter{section}{-1}


\section{Introduction}

The Ricci flow was first introduced by R. Hamilton in [Ha]. If the
underlying manifold $M$ is  K\"ahler with positive first Chern
class $c_1(M)>0$, it is more natural to study the following
K\"ahler-Ricci flow (normalized),
  \begin{align} &\frac {\partial g(t,\cdot)}{\partial t}
=-\text{Ric}(g(t,\cdot))+g(t,\cdot),\notag\\
&g(0,\cdot)=g,
\end{align}
 where $g$ is an initial K\"ahler metric with its K\"ahler form $\omega_g \in 2\pi
 c_1(M)>0.$
 It can be shown that (0.1) preserves the K\"ahler class.
 Moreover,  (0.1) has a global solution $g_t=g(t,\cdot)$  for any $t>0$ ([Ca]). So,
 the main interest and difficulty of (0.1) is to study the limiting  behavior of $g_t$
  as $t$ tends to $\infty$ (cf. [CT1], [CT2], [TZ3],  etc.).

In this paper, we study a stability problem of K\"ahler-Ricci flow
(0.1), namely, we assume that  $M$ admits a K\"ahler-Einstein
metric or a K\"ahler-Ricci soliton, and then we analysis the
behavior of evolved K\"ahler metrics $g_t$ of (0.1). We shall
prove

\begin{theo}[Main Theorem] Let $M$ be  a compact K\"ahler manifold with  $c_1(M)>0$ which admits a
K\"ahler Einstein metric $g_{KE}$ (or a K\"ahler-Ricci soliton
$(g_{KS,X_0})$ with respect some holomorphic vector field $X_0$ on
$M$) with its K\"ahler form in $2\pi c_1(M)$. Let $\psi$ be a
K\"ahler potential of an initial metric $g$ of (0.1) and
$\varphi=\varphi_t$ be a family of K\"ahler potentials of evolved
metrics $g_t$ of (0.1), i.e.,
$\omega_g=\omega_{KE}+\sqrt{-1}\p\bar\p\psi$ (or
$\omega_g=\omega_{KS}+\sqrt{-1}\p\bar\p\psi$) and
$\omega_\varphi=\omega_{KE}+\sqrt{-1}\p\bar\p\varphi$ (or
$\omega_g=\omega_{KS}+\sqrt{-1}\p\bar\p\varphi$), where
$\omega_g$, $\omega_\varphi=\omega_{g_t}$ and $\omega_{KE}$ (or
$\omega_{KS}$) denote K\"ahler forms of $g$, $g_t$ and $g_{KE}$
(or $g_{KS}$), respectively. Then there exists a small $\epsilon$
such that if
$$\|\psi-\underline\psi\|_{C^{2,\alpha}}\le \epsilon,$$
 where $\underline\psi=\frac{1}{\int_M\omega_{KE}^n}
 \int_M\psi\omega_{KE}^n$ (or $\underline\psi=\frac{1}{\int_M\omega_{KS}^n}
 \int_M\psi\omega_{KS}^n$), then  there
exist a family of  holomorphisms  $\sigma=\sigma_t$ on $M$ such
that K\"ahler potentials
$(\varphi_\sigma-\underline{\varphi_\sigma})$ are $C^k$-norm
uniformly bounded, where $\varphi_\sigma=\sigma^*\varphi+\rho$ and
$\rho=\rho_t$ are K\"ahler potentials defined by
 $\rho^*(\omega_{KE})=\omega_{KE}+\sqrt{-1}\partial\overline\partial\rho$
 and $
\int_M e^{-\rho}\omega_{KE}^n=\int_M\omega_{KE}^n$
 (or
 $\rho^*(\omega_{KS})=\omega_{KS}+\sqrt{-1}\partial\overline\partial\rho$
 and $\int_M e^{-\rho-X_0(\rho)}\omega_{KS}^n=\int_M\omega_{KS}^n$).
As a consequence, $g_t$ converge to $g_{KE}$ (or $g_{KS}$)
smoothly in the sense of Cheeger-Gromov.
\end{theo}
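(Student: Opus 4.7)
My plan is to reduce (0.1) to a scalar parabolic Monge-Amp\`ere type equation for the K\"ahler potential, linearize at the stationary metric, and then combine a gauge-fixing argument for the kernel of the linearization with parabolic Schauder bootstrap to obtain stability. For concreteness I describe the K\"ahler-Einstein case; the K\"ahler-Ricci soliton case is parallel, using the Cao-Tian-Zhu normalization with $X_0$.

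I would first rewrite (0.1) as the evolution
$$\frac{\partial \varphi}{\partial t} = \log \frac{(\omega_{KE}+\sqrt{-1}\,\partial\bar\partial\varphi)^n}{\omega_{KE}^n} + \varphi - h_{KE},$$
so that $g_{KE}$ corresponds to $\varphi\equiv 0$. The linearization at $\varphi=0$ is $L=\Delta_{KE}+I$. By Lichnerowicz-Matsushima the first nonzero eigenvalue of $-\Delta_{KE}$ is at least $1$, and the $\lambda=1$ eigenspace is spanned by real parts of Hamiltonian potentials of holomorphic vector fields on $M$. Consequently $\ker L$ is finite-dimensional and can be identified with the tangent space at the identity of the $\Aut(M)$-orbit through $g_{KE}$ in the space of potentials; on its $L^2$-orthogonal complement, $L$ is strictly negative with a spectral gap.

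The heart of the argument is the gauge fixing. I would select $\sigma=\sigma_t\in\Aut(M)$ at each $t$ so that the shifted potential $\varphi_\sigma=\sigma^*\varphi+\rho$, where $\rho=\rho_t$ is the normalized pullback potential defined in the statement, satisfies $\varphi_\sigma-\underline{\varphi_\sigma}\perp\ker L$ in $L^2(\omega_{KE}^n)$. Existence, uniqueness, and smooth $t$-dependence of $\sigma_t$ follow from an implicit function theorem on the finite-dimensional group $\Aut(M)$, using the transversality provided by the identification above, as long as $\varphi_t$ stays close to $0$. With this gauge in place, the flow for $\varphi_\sigma$ has linearization strictly negative-definite, and a standard differential-inequality argument yields exponential $L^2$-decay of $\varphi_\sigma-\underline{\varphi_\sigma}$. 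Parabolic Schauder theory then upgrades this to exponential $C^k$-decay, using Perelman's uniform bounds on scalar curvature, diameter, and Sobolev constant along (0.1).

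The main obstacle is making the gauge $t\mapsto\sigma_t$ globally well-defined along the flow and showing that the transversality to $\ker L$ is preserved for all $t\ge 0$, not merely at $t=0$. This requires a quantitative openness argument: the $C^{2,\alpha}$-closeness of $\psi$ to $\underline\psi$ must be small enough that $\varphi_t$ remains in the chart where the implicit function theorem applies throughout, bootstrapped from the exponential decay one obtains in the gauged equation. Once uniform $C^k$-bounds on $\varphi_\sigma-\underline{\varphi_\sigma}$ are in hand, smooth convergence of $\sigma_t^*g_t$ to $g_{KE}$ follows from Perelman's estimates, yielding Cheeger-Gromov convergence of $g_t$ as claimed, and improving [TZ3] in that one obtains an explicit stability neighborhood and decay rate.
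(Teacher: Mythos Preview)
Your strategy is reasonable in outline but differs from the paper's route in a key respect, and the soliton case is not as parallel as you suggest.

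In the K\"ahler--Einstein case the paper does \emph{not} gauge-fix $\varphi$ and then analyze the gauged parabolic equation. Instead it tracks $\dot\varphi$: differentiating the Monge--Amp\`ere flow gives the exact linear equation $\ddot\varphi=\Delta_{\omega_\varphi}\dot\varphi+\dot\varphi$, and the vanishing of the Futaki invariant forces $\dot\varphi-c(t)$ to be \emph{automatically} almost $L^2$-orthogonal to the first eigenspace of $\Delta_{\omega_\varphi}$, up to an $O(\epsilon_0)$ error, with no gauge choice at all. The spectral gap then yields exponential $L^2$-decay of $\dot\varphi-c(t)$, upgraded to $C^\alpha$ via Perelman's gradient and non-collapsing estimates (or a direct $W^{k,2}$ bootstrap), and integrated in $t$ to keep $\varphi-\underline\varphi$ small via a continuity argument. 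Only \emph{after} this is $\sigma_t$ chosen (via Bando--Mabuchi), and the implicit function theorem is applied to the \emph{elliptic} equation $\omega_{\varphi_\sigma}^n=e^{\tilde h\circ\sigma_t-\varphi_\sigma+a}\omega_{KE}^n$ at each fixed time, giving $\|\varphi_\sigma-\underline{\varphi_\sigma}\|_{C^{2,\alpha}}\le 2\|\tilde h\circ\sigma_t\|_{C^\alpha}$. Your route---gauge-fix first, then run the gauged parabolic flow---can in principle be made to work, but the drift term coming from $(\sigma_t^{-1})_*\dot\sigma_t$ must be shown to be controlled by $\|\varphi_\sigma\|$, and you have absorbed this into ``a standard differential-inequality argument'', which is exactly where the work lies. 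The paper's $\dot\varphi$-plus-Futaki mechanism sidesteps this entirely: the needed orthogonality comes for free from the geometry rather than from a choice of slice.

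Your claim that the soliton case is parallel hides a genuine gap. The linearized operator becomes $P\psi=\Delta\psi+\psi+\mathrm{Re}\langle\bar\partial\theta_X,\bar\partial\psi\rangle$, whose kernel is only the \emph{reductive} part $\eta_r(M)\subset\eta(M)$, so the gauge can only run over $\mathrm{Aut}_r(M)$. Approximate orthogonality of $\dot\varphi$ now comes from the \emph{generalized} Futaki invariant of Tian--Zhu, but the imaginary-part contribution $X'(\varphi)$ produces an irreducible $O(\epsilon_0^2)$ error in that orthogonality (and in the analogue of your slice condition) which cannot be removed unless the initial potential is $K_{X_0}$-invariant. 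Consequently the paper, in the general soliton case, obtains only \emph{uniform} $C^k$-bounds on $\varphi_\sigma-\underline{\varphi_\sigma}$ via an iterated finite-time argument and Appendix~2's almost-orthogonality lemma, and deduces Cheeger--Gromov convergence from Perelman's $W$-functional plus uniqueness of solitons; exponential convergence is claimed only under the additional $K_{X_0}$-invariance hypothesis. Your proposal, by treating the two cases symmetrically and promising exponential decay in both, would have to confront this $X'(\varphi)$ obstruction directly.
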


The main step in the proof of Theorem 0.1 is to obtain a decay
estimate for $\dot\varphi$ and $\varphi$ both when one studies the
convergence of K\"ahler-Ricci flow as in [CT2], [PS], [TZ3] etc.
In case that  $M$ admits a K\"ahler Einstein metric or $M$ admits
a K\"ahler-Ricci soliton and an initial potential $\psi$ is
$K_{X_0}$-invariant, we can obtain an exponential decay estimate
for both $\dot\varphi$ and $\varphi$, so  we can improve that
K\"ahler potentials $(\sigma^*\varphi+\rho)$ in the theorem
exponentially converge to zero as long as
$\|\psi-\underline\psi\|_{C^{2,\alpha}}$ is small, where $K_{X_0}$
is an one-parameter compact subgroup generated by the imaginary
part $X'$ of  $X_0$ ([TZ1],[TZ2]). This result is also obtained in
[TZ3] where a crucial step is to use the  monotonicity and  the
properness of the Mabuchi's K-energy on a K\"ahler-Einstein
manifold with $c_1(M)>0$ (or the monotonicity and  the properness
of the generalized K-energy on a compact K\"ahler manifold which
admits a K\"ahler-Ricci soliton , cf. [CTZ]). But at the present
paper, we avoid to use these energies in our case of the stability
problem. This advantage allows us to remove the
$K_{X_0}$-invariant condition for the initial potential $\psi$  in
case of K\"ahler-Ricci soliton  in  Theorem 0.1, although we need
more careful computations than the case of K\"ahler-Einstein
metric. Basically, we shall use the generalized Futaki-invariant
and the Gauge Transformation induced by the reductive subgroup
$\text{Aut}_r(M)$  of holomorphisms transformation group
$\text{Aut}(M)$ on $M$ to control the modified K\"ahler potentials
$(\sigma^*\varphi+\rho)$ along the K\"ahler-Ricci flow. We note
that the definition of generalized Futaki-invariant is independent
of the choice of K\"ahler metric, which needs no
$K_{X_0}$-invariant condition ([TZ2]).  Unfortunately, we could
not improve the convergence of $(\sigma^*\varphi+\rho)$
exponentially without the assumption of $K_{X_0}$-invariant
condition. But we believe that it is still true if one can extend
the Gauge Transformation $\text{Aut}_r(M)$ to $\text{Aut}(M)$ (cf.
Proposition 2.10).

 Theorem 0.1 will be proved in Section 1 and Section 2 while in
Section 1 we consider the case of K\"ahler-Einstein metric and in
Section 2, we consider the case of K\"ahler-Ricci soliton. The
rest of paper is as follows: In Section 3, we prove  a uniqueness
result for the limit of K\"ahler-Ricci flow as an application of
Theorem 0.1; Section 4 and Section 5 are two appendixes, one is a
lemma about a $W^{k,2}$-estimate for $\dot\varphi_t$ and another
is a lemma about the existence of almost orthonormality  of a
K\"ahler potential to the space of first eigenvalue-functions of
operator $(P,\omega_{KS})$ defined in Lemma 2.2 in Section 2.

The author would like to thank professor Gang Tian and professor
Xiuxiong Chen for valuable discussions.

\section{In case of K\"ahler-Einstein metric}

In this section, we assume that $M$ admits a K\"ahler Einstein
metric $g_{KE}$ with its K\"ahler form $\omega_{KE}\in 2\pi
c_1(M)$. For simplicity, we set a class of K\"ahler potentials by
\begin{align}\mathcal{M}(\omega_{KE})=\{\phi\in
C^\infty(M,R)|~~\omega_{KE}+\sqrt{-1}\p\bar\p\phi>0\}\notag.
 \end{align}
Then for any K\"ahler metric $g$ with its K\"ahler form
$\omega_g\in 2\pi c_1(M)$, we have
$\omega_g=\omega_{KE}+\sqrt{-1}\p\bar\p\psi$ for some $\psi\in
\mathcal{M}(\omega_{KE})$ and K\"ahler-Ricci flow (0.1) is
equivalent to a parabolic equation of  complex Monge-Amp\`ere type
for K\"ahler potentials $\varphi_t=\varphi(t,\cdot)$ with
$\omega_{g_t}=\omega_{KE}+\sqrt{-1}\p\bar\p\varphi_t$,
\begin{align}
&\frac{\p\varphi}{\p
t}=\log\frac{\omega^n_\varphi}{\omega_{KE}^n}+\varphi, \notag\\
&\varphi(0)=\psi-\underline{\psi},
\end{align}
 where $\underline\psi=\frac{1}{V}\int_M\psi\omega_{KE}^n$ and $V=\int_M\omega_{KE}^n$.

Set a H\"older space by
$$\mathcal{K}(\epsilon_0)=\{\phi\in\mathcal{M}(\omega_{KE})|~~\|\phi-\underline\phi\|_{C^{2,\alpha}}\leq\epsilon_0\}.
$$
  Let $\text{Aut}_0(M)$ be the
connected component of holomorphisms transformation group of $M$
which contains the identity map of $M$.  Then we shall prove

\begin{theo} There exists a small $\epsilon$ such that for any initial data $\psi\in
\mathcal{K}(\epsilon)$ in equation (1.1),
$\|\varphi-\underline{\varphi}\|_{C^{2,\alpha}}$  are uniformly
bounded, where $\varphi=\varphi_t=\varphi(t,\cdot)$ are evolved
K\"ahler potentials of (1.1).  Moreover,  there exist a family of
$\sigma=\sigma_t\in\text{Aut}_0(M)$  such that K\"ahler potentials
$(\varphi_\sigma-\underline{\varphi_\sigma})$ converge
exponentially to $0$ as $t\to \infty$, where
$\varphi_\sigma=(\sigma^*\varphi+\rho)$, and $\rho=\rho_t$ are
K\"ahler potentials defined by
\begin{align}
&\sigma^*(\omega_{KE})=\omega_{KE}+\sqrt{-1}\partial\overline\partial\rho,\notag\\
&\int_M e^{-\rho}\omega_{KE}^n=\int_M\omega_{KE}^n.
 \end{align}
As a consequence, K\"ahler metrics $\sigma^*(\omega_{\varphi})$
converge exponentially to $\omega_{KE}$.
\end{theo}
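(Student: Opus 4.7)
The overall plan is to linearize (1.1) about the K\"ahler-Einstein metric and exploit the spectral gap of the linearized operator. Differentiating (1.1) in $t$ yields the heat-type equation $\partial_t\dot\varphi = \Delta_\varphi\dot\varphi + \dot\varphi$, so at $\varphi = 0$ the relevant operator is $L := \Delta_{KE} + 1$. It is classical that on $(M,g_{KE})$ the first nonzero eigenvalue $\lambda_1$ of $-\Delta_{KE}$ satisfies $\lambda_1 \ge 1$, with equality iff $\Aut_0(M)$ is nontrivial, and the corresponding eigenspace $\Lambda_1 \subset C^\infty(M)$ is spanned by real parts of holomorphy potentials. Hence $L$ has kernel $\RR \oplus \Lambda_1$ and is strictly negative on the $L^2(\omega_{KE})$-orthogonal complement with spectral gap $\lambda_2 - 1 > 0$. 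The constant direction is killed by the normalization in (1.2), while the $\Lambda_1$-direction will be absorbed by the holomorphism $\sigma_t$.

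First I would fix the gauge. For each $\varphi$ with $\|\varphi - \underline\varphi\|_{C^{2,\alpha}}$ sufficiently small, I would apply an implicit function theorem to produce $\sigma \in \Aut_0(M)$, unique modulo the compact isotropy $\text{Isom}(M,g_{KE})$, such that $\varphi_\sigma = \sigma^*\varphi + \rho$, with $\rho$ normalized by (1.2), is $L^2(\omega_{KE})$-orthogonal to $\Lambda_1$. The linearization at $\sigma = \text{id}$, $\varphi = 0$ is precisely the identification of $\text{Lie}\,\Aut_0(M)/\text{Lie}\,\text{Isom}(M,g_{KE})$ with $\Lambda_1$, so the projection map onto $\Lambda_1$ is a local submersion in $\sigma$ and $\sigma_t$ exists and varies $C^{2,\alpha}$-smoothly so long as $\|\varphi_t - \underline{\varphi_t}\|_{C^{2,\alpha}}$ stays small. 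The $W^{k,2}$-estimate for $\dot\varphi_t$ stated in Section 4 then propagates any $L^2$-control to higher-order control along the flow.

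Next I would derive the exponential decay. Because $\varphi_\sigma - \underline{\varphi_\sigma}$ is $L^2$-orthogonal to both constants and $\Lambda_1$, the linear part $L$ contracts it at rate $\lambda_2 - 1$. Setting $E(t) = \int_M |\dot\varphi_\sigma|^2\omega_{KE}^n$ and testing the evolution equation for $\dot\varphi_\sigma$ against itself, the spectral gap together with quadratic control of the nonlinearities in $\|\varphi_\sigma - \underline{\varphi_\sigma}\|_{C^{2,\alpha}}$ and of the $\dot\sigma_t$-correction gives a differential inequality of the form
\[
\frac{d}{dt}E(t) \le -2(\lambda_2 - 1 - C\epsilon)\,E(t).
\]
For $\epsilon$ small this yields exponential $L^2$-decay of $\dot\varphi_\sigma$; Section 4 upgrades this to $C^k$-decay for every $k$, so $\varphi_\sigma - \underline{\varphi_\sigma}$ itself decays exponentially and $\sigma^*\omega_\varphi$ converges smoothly to $\omega_{KE}$. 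The uniform $C^{2,\alpha}$-boundedness of $\varphi - \underline\varphi$ then follows by a standard parabolic regularity argument applied within the small neighborhood where the gauge exists.

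The main obstacle I anticipate is closing the bootstrap dynamically: since $\Aut_0(M)$ is noncompact, one must rule out that $\sigma_t$ escapes to infinity and must re-establish the gauge over the full interval $[0,\infty)$. This would be handled by a continuity argument---the set of $t$ on which $\|\varphi_\sigma - \underline{\varphi_\sigma}\|_{C^{2,\alpha}} \le \epsilon'$ is open, closed by the decay estimate, and contains $t = 0$, so it is all of $[0,\infty)$, trapping $\sigma_t$ in a bounded region. A secondary technicality is computing the evolution equation for $\varphi_\sigma$ itself, since both $\sigma_t$ and $\rho_t$ depend on $\varphi_t$; however in the K\"ahler-Einstein case the identity $\sigma^*\omega_\varphi = \omega_{KE} + \sqrt{-1}\partial\bar\partial\varphi_\sigma$ makes the bookkeeping transparent, which is why this case should be substantially simpler than the K\"ahler-Ricci soliton case treated in Section 2.
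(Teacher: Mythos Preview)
Your sketch is sound, but it follows a genuinely different route from the paper. You gauge-fix at the outset---choosing $\sigma_t$ so that $\varphi_\sigma-\underline{\varphi_\sigma}\perp\Lambda_1$---and then run the spectral-gap decay argument on the gauge-fixed potential, which forces you to track the extra drift terms $\theta_{Y_t}+Y_t(\varphi_\sigma)$ coming from $\dot\sigma_t$ in the evolution of $\varphi_\sigma$ and to show $\|Y_t\|=O(\|\varphi_\sigma\|^2)$ from the gauge condition. The paper instead works with the \emph{ungauged} flow throughout and introduces $\sigma_t$ (via Bando--Mabuchi) only at the very end, after the uniform bound $\varphi_t\in\mathcal K(\epsilon_0)$ is already secured. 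The device that makes this possible is the vanishing of the Futaki invariant: from $\int_M\Delta_\varphi(\theta_X+X(\varphi))(\dot\varphi-c(t))\,\omega_\varphi^n=0$ one reads off directly that $\dot\varphi-c(t)$ is $O(\epsilon_0)$-almost orthogonal to the first eigenspace $\Lambda_1(M,\omega_\varphi)$, so the spectral gap applies to $\dot\varphi$ itself with no gauge at all (Lemma~1.3, Case~2). This gives exponential decay of $H(t)=\frac1V\int_M|\dot\varphi-c(t)|^2\omega_\varphi^n$, which is upgraded to $C^\alpha$ via Perelman's gradient and non-collapsing estimates (Lemma~1.5), integrated in $t$ to a $C^0$-bound on $\tilde\varphi$ (Proposition~1.7), and then fed into an implicit-function-theorem argument on the scalar Monge--Amp\`ere equation $\omega_\varphi^n=\omega_{KE}^n e^{\tilde h+\overline\varphi+a}$ to close the continuity bootstrap on $\overline\varphi=\phi+\phi^\perp$. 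Your approach is the more standard slice/\L ojasiewicz-style argument and is conceptually transparent; the paper's Futaki trick is sharper in that it eliminates entirely the coupled estimate for $\dot\sigma_t$ that you correctly flag as the residual technicality.
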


We need several lemmas to prove Theorem 1.1.  Let
$\Lambda_1(M,\omega_{KE})$ be a finite dimensional linear space of
the first eigenvalue-functions of Lapalace
 operator $\triangle_{\omega_{KE}}$ associated to the metric $\omega_{KE}$. Then by using the
 Bochner formula, it is well-known that the first non-zero eigenvalue
 is $1$ and
 $\Lambda_1(M,\omega_{KE})=\text{span}\{\theta_X|~X\in\eta(M)\}$,
 where $\eta(M)$ is a linear space consisting of holomorphic vector fields on
 $M$ which is isomorphic  to the Lie algebra of $\text{Aut}_0(M)$
 and $\theta_X$ is a potential of $X$  defined by
\begin{align}&\sqrt{-1}\overline\partial\theta_X=i_X(\omega_{KE}),\notag\\
&\int_M\theta_X\omega_{KE}^n=0.
\end{align}

 By using the continuity of   eigenvalues  of Lapalcian operators, one sees

\begin{lem} Let $\lambda_1(\omega_\phi)$ and  $\lambda_2(\omega_\phi)$ be the first and the second
eigenvalues of Lapalcian operator associated to K\"ahler metric
$\omega_\phi$, respectively. Then there exists a $\delta_0$ such
that for any $\phi\in\mathcal{K}(\epsilon_0)$, we have
 \beqs &&\lambda_1(\omega_\phi)\ge 1+\delta_0,~~\text{if}~~ \eta(M)= 0,\\
&&\lambda_2(\omega_\phi)\ge 1+\delta_0, ~~ \text{if}~~ \eta(M)\ne
0,\eeqs
 where $\epsilon_0$ is a small positive number.
 \end{lem}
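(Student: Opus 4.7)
The plan is to combine the Bochner--Lichnerowicz spectral gap at $\omega_{KE}$ with continuous dependence of eigenvalues on the metric in the $C^{2,\alpha}$ topology.

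First I would analyze the spectrum at $\omega_{KE}$ itself. By the Bochner formula on a K\"ahler--Einstein manifold with $c_1(M)>0$, every nonzero eigenvalue $\mu$ of $\Delta_{\omega_{KE}}$ satisfies $\mu\geq 1$, with equality precisely when the eigenfunction is the potential $\theta_X$ of some $X\in\eta(M)$; equivalently, the $1$-eigenspace is exactly $\Lambda_1(M,\omega_{KE})=\mathrm{span}\{\theta_X\mid X\in\eta(M)\}$. Consequently, if $\eta(M)=0$ then $\lambda_1(\omega_{KE})>1$, while if $\eta(M)\neq 0$ then $\lambda_1(\omega_{KE})=1$ has finite multiplicity $\dim\Lambda_1(M,\omega_{KE})<\infty$ and hence $\lambda_2(\omega_{KE})>1$. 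In either case I fix $2\delta_1>0$ to be the size of the relevant gap above $1$.

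Next I would invoke continuous dependence of the spectrum on the metric. For $\phi\in\mathcal{K}(\epsilon_0)$ the coefficients of $\Delta_{\omega_\phi}$ and the volume form $\omega_\phi^n$ differ from their $\omega_{KE}$-counterparts by $O(\epsilon_0)$ in $C^{\alpha}$, uniformly in $\phi$. Applying the Courant--Fischer min--max principle on the $W^{1,2}(M)$-subspace of functions with vanishing $\omega_{KE}$-mean and comparing Rayleigh quotients of $\omega_\phi$ against those of $\omega_{KE}$ yields $|\lambda_k(\omega_\phi)-\lambda_k(\omega_{KE})|\leq C\epsilon_0$ for each fixed $k$, with a constant $C$ independent of $\phi\in\mathcal{K}(\epsilon_0)$. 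Choosing $\epsilon_0$ so small that $C\epsilon_0<\delta_1$ for $k=1,2$ produces the stated conclusion with $\delta_0:=\delta_1$.

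The main obstacle is not any single step, since each is standard; rather, it is confirming that the gap $\lambda_1(\omega_{KE})>1$ when $\eta(M)=0$, and $\lambda_2(\omega_{KE})>1$ when $\eta(M)\neq 0$, is \emph{strict}. This strictness is precisely the rigidity half of the Bochner formula already used by the paper in identifying $\Lambda_1(M,\omega_{KE})$, so it is available for free. The continuity step is routine elliptic perturbation theory, and uniformity in $\phi$ follows automatically from the uniform $C^\alpha$-closeness of the symbols and volume forms.
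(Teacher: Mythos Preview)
Your proposal is correct and matches the paper's approach exactly: the paper states the lemma immediately after recording the Bochner identification of the $1$-eigenspace with $\Lambda_1(M,\omega_{KE})$ and introduces it with the single sentence ``By using the continuity of eigenvalues of Laplacian operators, one sees,'' giving no further argument. You have simply supplied the details (Bochner rigidity for the strict gap at $\omega_{KE}$, then min--max/Rayleigh quotient comparison for the perturbation), which is precisely what the paper leaves implicit.
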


  Fix a  large number $T$ and $N$, we can choose a sufficient small $\epsilon$
depends on $T$, $\epsilon_0$ and $N$ such that for any  $t\leq T$,
 evolved K\"ahler potentials $\varphi_t$ of (1.1) lie in  $\mathcal{K}(\frac{\epsilon_0}{2})$
 and satisfy
\begin{align}
|\dot\varphi_t-c(t)|_{C^0}\leq (\frac{\epsilon_0}{2N})^2, \text{
and } osc(\varphi_t)\leq\frac{\epsilon_0}{4N},
\end{align}
whenever $\|\psi-\underline\psi\|_{C^{2,\alpha}}\leq\epsilon$.
Here $c(t)=\frac{1}{V}\int_M\dot\varphi_t\omega_{\varphi_t}^n.$
 Choose a maximal $\delta(T)$ such that $\varphi_t\in\mathcal{K}(\epsilon_0)$ for
any $t<T+\delta(T)$. We shall show that $\delta(T)$ must be the
infinity whenever $T$ and $N$ are large enough. First we prove

\begin{lem} Let $H(t)=\frac{1}{V}\int_M|\dot\varphi_t-c(t)|^2\omega_{\varphi_t}^n$.
 Then for any $t\in[0,T+\delta(T))$, there
exists a $\theta>0$ such that
 \begin{align}
 H(t)\le H(0)e^{-\theta t}.
\end{align}
\end{lem}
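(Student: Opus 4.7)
The plan is to derive a differential inequality $\dot H(t)\le -\theta H(t)$ from the flow equation and the spectral information in Lemma~1.2. Differentiating (1.1) in $t$ gives $\ddot\varphi=\Delta_\varphi\dot\varphi+\dot\varphi$. Setting $u=\dot\varphi$, $w=u-c(t)$, and using $\frac{d}{dt}\omega_\varphi^n=(\Delta_\varphi u)\,\omega_\varphi^n$ together with the auxiliary identity $\dot c=c-\frac{1}{V}\|\nabla u\|^2_{\omega_\varphi}$, a direct computation yields
\begin{align}
V\dot H(t)=-2\|\nabla w\|^2_{\omega_\varphi}+2VH(t)-2\int_M w\,|\nabla w|^2\,\omega_\varphi^n.\notag
\end{align}
Since $\int_M w\,\omega_\varphi^n=0$, Lemma~1.2 supplies a Poincar\'e-type bound $\|\nabla w\|^2\ge\lambda_1(\omega_\varphi)\|w\|^2$, while the last error term is controlled by $|w|_{C^0}\le(\epsilon_0/N)^2$ from (1.3). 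In the case $\eta(M)=0$, Lemma~1.2 gives $\lambda_1(\omega_\varphi)\ge1+\delta_0$, and taking $N$ large immediately produces $V\dot H\le-\theta VH$, as desired.

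The essential difficulty lies in the case $\eta(M)\ne 0$, where $\lambda_1(\omega_\varphi)$ can be arbitrarily close to $1$. Here I would exploit the vanishing of the Futaki invariant on the K\"ahler-Einstein manifold $M$: since the Ricci potential satisfies $h_{\omega_\varphi}=-\dot\varphi+\mathrm{const}$, the identity $F(X)\equiv 0$ for $X\in\eta(M)$ reads $\int_M X(\dot\varphi)\,\omega_\varphi^n=0$. Using $\mathcal{L}_X\omega_\varphi^n=(\Delta_\varphi\theta^{(\varphi)}_X)\,\omega_\varphi^n$ and integration by parts, this rewrites as $\int_M w\cdot \Delta_\varphi\theta^{(\varphi)}_X\,\omega_\varphi^n=0$, where $\theta^{(\varphi)}_X$ is the holomorphy potential of $X$ with respect to $\omega_\varphi$. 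Because $\theta_X$ is an eigenfunction of $\Delta_{KE}$ with eigenvalue $-1$ and $\omega_\varphi\in\mathcal{K}(\epsilon_0)$, the residual $\Delta_\varphi\theta^{(\varphi)}_X+\theta^{(\varphi)}_X$ has $L^2$-norm $O(\epsilon_0)$. Combining these, the $L^2(\omega_\varphi)$-projection $w_0$ of $w$ onto the first eigenspace $\Lambda_1(\omega_\varphi)$ (which is spanned by functions close to $\{\theta_X\}_{X\in\eta(M)}$) satisfies $\|w_0\|_{L^2}\le C\epsilon_0\|w\|_{L^2}$.

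Decomposing $w=w_0+w_\perp$ and invoking the bound $\lambda_2(\omega_\varphi)\ge 1+\delta_0$ from Lemma~1.2 gives $\|\nabla w\|^2-\|w\|^2\ge \delta_0\|w_\perp\|^2\ge \delta_0(1-C^2\epsilon_0^2)\|w\|^2$. Substituting back into the $V\dot H$ formula with $\epsilon_0$ and $1/N$ chosen small enough yields $\dot H\le -\theta H$, which integrates to (1.5). The main obstacle throughout is this $\eta(M)\ne 0$ case: translating the hidden orthogonality supplied by $F\equiv 0$ into a quantitative $L^2$-bound on $w_0$ requires carefully comparing the holomorphy potentials $\theta^{(\varphi)}_X$ with the true first eigenfunctions of $\Delta_\varphi$, both of which are only approximately equal to $\theta_X$.
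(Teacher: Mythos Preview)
Your proposal is correct and follows essentially the same route as the paper: the same differential identity for $H$, the same Poincar\'e bound via Lemma~1.2 when $\eta(M)=0$, and the same Futaki-invariant argument to show that the projection of $w$ onto $\Lambda_1(\omega_\varphi)$ has $L^2$-norm $O(\epsilon_0)\|w\|_{L^2}$ when $\eta(M)\ne 0$. Two small points worth tightening: the bound $|w|_{C^0}\le(\epsilon_0/N)^2$ you invoke is asserted only for $t\le T$, so on the full interval $[0,T+\delta(T))$ the paper instead uses the cruder $|\dot\varphi|\le 3\epsilon_0$ coming directly from $\varphi\in\mathcal{K}(\epsilon_0)$ and (1.1); and your claim that $\Delta_\varphi\theta_X^{(\varphi)}+\theta_X^{(\varphi)}$ is $O(\epsilon_0)$ implicitly needs a $C^3$-estimate on $\varphi$ (since $\theta_X^{(\varphi)}=\theta_X+X(\varphi)$), which the paper supplies separately via the Implicit Function Theorem together with $W^{k,2}$-bounds on $\dot\varphi$.
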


\begin{proof} For simplicity,  we let $\varphi=\varphi_t$. By (1.1),  one
sees
$$|\dot\varphi|\le 3\epsilon_0,~~\forall t\in[0,T+\delta(T)).$$
Since $\varphi$ satisfies,
\beqn\ddot\varphi=\triangle\dot\varphi+\dot\varphi,\eeqn
  then by a direct computation, we have
 \begin{align} &\frac{d}{dt}H_0(t)\notag\\
&=2\frac{1}{V}\int_M(\dot\varphi-c(t))(\ddot\varphi-\dot
c(t))\omega^n_\varphi
+\frac{1}{V}\int_M(\dot\varphi-c(t))^2\triangle_\varphi\dot\varphi\omega_\varphi^n\notag\\
&=2\frac{1}{V}\int_M(\dot\varphi_t-c(t))(\triangle_\varphi\dot\varphi+\dot\varphi)\omega^n_\varphi
+\frac{1}{V}\int_M(\dot\varphi-c(t))^2\triangle_\varphi\dot\varphi\omega_\varphi^n\notag\\
&=-2\frac{1}{V}\int_M|\nabla(\dot\varphi-c(t))|^2\omega^n_\varphi
+2\frac{1}{V}\int_M(\dot\varphi-c(t))^2\omega^n_\varphi\notag\\
&-2\frac{1}{V}\int_M(\dot\varphi-c(t))\|\nabla(\dot\varphi-c(t))\|^2\omega_\varphi^n\notag\\
&=2H_0(t)
-2\frac{1}{V}\int_M(1+\dot\varphi-c(t))\|\nabla(\dot\varphi-c(t))\|^2\omega_\varphi^n\notag\\
&\leq 2H_0(t)
-2(1-6\epsilon_0)\frac{1}{V}\int_M\|\nabla(\dot\varphi-c(t))\|^2\omega_\varphi^n.
\end{align}

Case 1, $\eta(M)=0$.  Then by (1.7) and  Lemma 1.1, we have
$$
\frac{d}{dt}H_0(t)\leq-[-2+2(1-6\epsilon_0)(1+\delta_0)]H_0(t).
$$
By choosing $\theta=-2+2(1-4\epsilon_0)(1+\delta_0)\ge \delta_0$,
we will get
\begin{align}
H_0(t)\leq H_0(0)e^{-\theta t}.
\end{align}

 Case 2,  $\eta(M)\neq0$. Since the Futaki-invariant vanishes,
for any $X\in \eta(M)$, we have
$$
\int_M\triangle(\theta_X+X(\varphi))(\dot\varphi-c(t))\omega^n_\varphi=0,
$$
 where $\theta_X$ is the potential of $X$ defined by (1.3) and $X(\phi)$ is the derivative of $\phi$ along $X$.
It follows
 $$|\int_M \theta_X(\dot\varphi-c(t))\omega^n_\varphi|\le
C\epsilon_0\int_M|\dot\varphi-c(t)|\omega^n_\varphi,$$
 for any $X\in\eta(M)$ with satisfying
 $\int_M\|X\|^2_{\omega_{KE}}\omega_{KE}^n=1.$
Here we used an estimate
$$ \|\varphi-\underline{\varphi}\|_{C^3}=O(\epsilon_0)$$
 by the regularity of  K\"ahler potentials
 $\varphi\in\mathcal{K}(\epsilon_0)$, which can be obtained by the
 Implicity Functional Theorem for equation (1.22) (cf. an argument at the last paragraph of this section) with the help of
 $W^{k,2}$-estimate
 $$\|\dot\varphi-c(t)\|_{W^{k,2}}= O(\epsilon_0)$$
  for $\dot\varphi$ (cf. the argument in Appendix 4.1). Thus by
using the continuity of the eigenvalue functions, one sees
  \begin{align}|\int_M
\psi^i(\dot\varphi-c(t))\omega^n_\varphi|\le
C'\epsilon_0\int_M|\dot\varphi-c(t)|\omega^n_\varphi,\end{align}
 where $\psi^i$ are the first eigenvalue functions of the Lapalacian
 operator associated to the metric $\omega_\varphi$,  which satisfy  $\int_M |\psi^i|^2\omega_{\varphi}^n=1.$

Let  $\Lambda_1(M,\omega_{\varphi})$ be a linear space  spanned by
a basis $\{\psi^i\}$ and $\Lambda_1^\bot(M,\omega_{\varphi})$ be a
subspace of $L^2$-integral functions   which are orthogonal to
$\Lambda_1(M,\omega_{\varphi})\cup\Bbb R$. Then we can decompose
$(\dot\varphi_t-c(t))$ as
$$
\dot\varphi_t-c(t)=\phi+\phi^\bot,$$
 with  $\phi\in \Lambda_1(M,\omega_{\varphi})$  and
$\phi^\bot\in\Lambda_1^\bot(M,\omega_{\varphi})$. Thus by (1.9),
we get
$$
\int_M|\phi|^2\omega^n_\varphi\leq A\epsilon_0^2
\int_M(\dot\varphi_t-c(t))^2\omega^n_\varphi,
$$
for some uniform constant $A$. It follows
\begin{align}
\int_M|\phi^\bot|^2\omega^2_\varphi\geq(1-A\epsilon_0^2)\int_M(\dot\varphi_t-c(t))^2\omega^2_\varphi.
\end{align}
Hence  by Lemma 1.1, we get
\begin{align}
\int_M \|\nabla(\dot\varphi_t-c(t)\|^2\omega^n_\varphi &\geq
\int_M\|\nabla\psi^\bot\|^2\omega^n_\varphi\notag\\
&\geq (1+\sigma_0)(1-A\epsilon_0^2)
\int_M(\dot\varphi_t-c(t))^2\omega^n_\varphi.
\end{align}
By choosing $\theta=
  2(1-6\epsilon_0)(1+\sigma_0)(1-C\epsilon_0^2)-2\ge
\sigma_0$, we obtain from (1.7),
$$
\frac{ d H_0(t)}{dt}\leq -\theta H_0(t).
$$
As a consequence, we have
$$
H_0(t)\leq H_0(0)e^{-\theta t}.
$$
\end{proof}

Next we want to use  Perelman's deep estimates for the gradient of
$\dot\varphi_t$ and the non-collapsing result for metric
$\omega_{\varphi_t}$ to get a $C^0$-estimate of $\dot\varphi_t$
with help of Lemma 1.3. Let's state the Perelman's result (a
detailed proof can be found in [ST]).

\begin{lem}(Perelman)
Let $g_t$ be the evolved  K\"ahler metrics  of  (0.1) and
$\varphi=\varphi_t$ be  K\"ahler potentials of $g_t$. Then there
exists a uniform constant $C$ independent of $t$ (just depending
on the initial metric $g$) such that  the following two facts
hold,

i) $\|\nabla \dot\varphi\|_{\omega_{\varphi}}\leq C$;

 ii) for $x\in M$
and $0<r\leq \text{diam}(M,g(t))$,
$\int_{B_r(x)}\omega^n>C^{-1}r^{2n}$,
 where  $\text{diam}(M,g(t))$ denote the diameters of $(M,g(t))$
which are  uniformly bounded.
\end{lem}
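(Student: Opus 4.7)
The plan is to follow Perelman's original strategy for the K\"ahler-Ricci flow on a Fano manifold, whose detailed execution appears in Sesum-Tian [ST]. The central object is the normalized Ricci potential $u_t$ of $g_t$, characterized by $\text{Ric}(\omega_{\varphi_t})-\omega_{\varphi_t}=\sqrt{-1}\p\bar\p u_t$ together with the normalization $\frac{1}{V}\int_M e^{-u_t}\omega_{\varphi_t}^n=1$. Differentiating the flow equation (1.1) in $t$ and using $\sqrt{-1}\p\bar\p\dot\varphi_t=-\sqrt{-1}\p\bar\p u_t$, one sees that $\dot\varphi_t+u_t$ is a spatially constant function of $t$; hence any uniform bound on $u_t$ transfers to $\dot\varphi_t$ modulo time-dependent constants, and in particular assertion (i) is equivalent to $\|\nabla u_t\|_{g_t}\le C$.

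First, I would establish the $\kappa$-non-collapsing assertion (ii). Perelman's $\cW$-entropy functional is monotone along the unnormalized Ricci flow; since (0.1) differs from that flow only by a time-dependent rescaling on any fixed time interval, the associated invariant $\mu(g_t,\tau)$ admits a uniform lower bound depending only on the initial metric $g$. By Perelman's logarithmic-Sobolev argument, this lower bound yields $\text{Vol}_{g_t}(B_r(x))\ge \kappa r^{2n}$ for all $x\in M$ and $r\le \text{diam}(M,g_t)$ with $\kappa>0$ independent of $t$, which is exactly (ii).

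Second, and more delicately, I would derive the simultaneous uniform bounds $|u_t|,\,\|\nabla u_t\|_{g_t},\,|R_{g_t}|\le C$ and $\text{diam}(M,g_t)\le C$. A direct computation shows $u_t$ satisfies $(\p_t-\Delta_{\omega_{\varphi_t}})u_t=u_t-a(t)$ for a time-dependent constant $a(t)$, and a maximum-principle argument controls $u_t$ in $C^0$ once $a(t)$ is bounded. The scalar-curvature and diameter bounds constitute the deep step: Perelman proceeds by contradiction, assuming blow-up along some sequence $t_k\to\infty$, and uses his reduced $\cL$-geometry combined with the $\kappa$-non-collapsing and the uniform $\cW$-entropy bound to extract a non-collapsed shrinking-soliton-type limit whose existence contradicts the entropy estimate. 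Given the $C^0$- and curvature bounds, the gradient estimate in (i) then follows by applying the maximum principle to a quantity of the form $\|\nabla u_t\|_{g_t}^2+u_t^2$ along the flow and translating back to $\dot\varphi_t$.

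The main obstacle is the joint diameter and scalar-curvature bound; this is the heart of Perelman's unpublished argument, and I would refer to [ST] for its rigorous implementation. Everything else — the non-collapsing from $\cW$-monotonicity and the gradient estimate from the maximum principle — follows by relatively standard techniques once the deep bounds are in place.
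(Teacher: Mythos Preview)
The paper does not prove this lemma at all: it merely states Perelman's result and refers the reader to [ST] for a detailed proof. Your outline is a faithful sketch of exactly that argument (the Sesum--Tian write-up of Perelman's unpublished estimates), so there is nothing to compare --- your approach \emph{is} the one the paper defers to. Note also that the paper explicitly remarks (Remark~1.6 and Appendix~1) that for its purposes one can bypass Perelman's estimates entirely by using direct $W^{k,2}$-estimates on $\dot\varphi_t$, so in the context of this paper the lemma is treated as a black box rather than something to be reproved.
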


\begin{lem}\label{C0 Bound}
For any $t\in[0,T+\delta(T))$, we have
\begin{align}
|\dot\varphi_t-c(t)|\leq\min\{(\frac{\epsilon_0}{2N})^2,Ce^{-\frac{\theta}{2(n+1)}t}\}
\end{align} and
\begin{align}
||\dot\varphi_t-c(t)||_{C^\alpha}\leq
C\{\min\{(\frac{\epsilon_0}{2N})^2,
Ce^{-\frac{\theta}{2(n+1)}t}\}\}^{\frac{1}{2}}.
\end{align}
Here  $\alpha\le\frac{1}{4}$ and  $C$  depends only on the
constant in Lemma 1.4.
\end{lem}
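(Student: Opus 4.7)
The plan is to convert the $L^{2}$ decay from Lemma 1.3 into pointwise decay by interpolation, using the uniform gradient bound and the non-collapsing estimate supplied by Lemma 1.4. Set $f=f_{t}=\dot\varphi_{t}-c(t)$ for brevity. From Perelman we have $\|\nabla f\|_{\omega_{\varphi_{t}}}\le C$ uniformly in $t$, and from the diameter bound together with non-collapsing we have $\operatorname{Vol}(B_{r}(x),\omega_{\varphi_{t}})\ge C^{-1}r^{2n}$ for every $x\in M$ and every $r\le \operatorname{diam}(M,g_{t})$.

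First I would establish the pointwise bound. Let $x_{0}$ be a point where $|f|$ attains its supremum $M_{t}=\|f_{t}\|_{C^{0}}$. By the gradient estimate, on the geodesic ball $B_{r}(x_{0})$ with $r=M_{t}/(2C)$ (which is admissible since $M_{t}$ is bounded and the diameter is bounded below by a positive constant along the flow) one has $|f|\ge M_{t}/2$. Integrating against the non-collapsing estimate,
\begin{equation*}
V\cdot H(t)=\int_{M}f^{2}\,\omega_{\varphi_{t}}^{n}\ge \Bigl(\tfrac{M_{t}}{2}\Bigr)^{2}\operatorname{Vol}(B_{r}(x_{0}))\ge C_{1}\,M_{t}^{2n+2},
\end{equation*}
so $M_{t}\le C_{2}\,H(t)^{1/(2n+2)}$. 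Substituting the decay from Lemma 1.3 gives $M_{t}\le C_{3}\,e^{-\theta t/(2n+2)}$, after absorbing $H(0)^{1/(2n+2)}$ (which is small because the initial data lies in $\mathcal{K}(\epsilon)$) into the constant. The further bound by $(\epsilon_{0}/2N)^{2}$ follows from the standing hypothesis (1.4) on the interval $[0,T+\delta(T))$, so taking the minimum of the two bounds yields (1.12).

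For the Hölder estimate (1.13) I would interpolate between the $C^{0}$ bound just proved and the $C^{1}$ bound $\|\nabla f\|\le C$ from Perelman. For any two points $x,y\in M$ at Riemannian distance $d=d_{\omega_{\varphi_{t}}}(x,y)$, one has both $|f(x)-f(y)|\le 2M_{t}$ and $|f(x)-f(y)|\le C\,d$; optimising the geometric mean gives $|f(x)-f(y)|\le C'\,M_{t}^{1/2}\,d^{1/2}$, which is the bound (1.13) with $\alpha=1/2$, hence a fortiori for any $\alpha\le 1/4$. One should note that the $C^{\alpha}$ norm computed with respect to $\omega_{\varphi_{t}}$ is comparable to the one computed with respect to $\omega_{KE}$ because $\varphi_{t}\in\mathcal{K}(\epsilon_{0})$, so the two Hölder constants differ by a harmless multiplicative factor.

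The main obstacle is producing a geodesic ball of definite shape on which $|f|$ stays close to its maximum, so that the non-collapsing can be applied: this is exactly where both Perelman estimates of Lemma 1.4 are used in tandem, and it is also why the exponent $1/(2n+2)$ appears in (1.12) instead of $1/2$. Everything else is standard interpolation, and the constants depend only on those in Lemma 1.4 together with $\theta$ from Lemma 1.3.
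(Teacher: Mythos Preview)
Your argument is correct and follows the same strategy as the paper's: convert the $L^{2}$ decay of Lemma~1.3 into a pointwise bound by combining Perelman's gradient estimate with the non-collapsing inequality on a small ball around the maximum point, then interpolate between the $C^{0}$ and $C^{1}$ bounds for the H\"older estimate. The paper differs only in bookkeeping---it fixes the ball radius as $r=\min\{\operatorname{diam}(M,g_{t}),\,e^{-\theta t/(2(n+1))}\}$ (splitting into two cases accordingly) and treats the H\"older quotient by the dichotomy $d(x,y)\gtrless e^{-\theta t/(2(n+1))}$, whereas your choice $r=M_{t}/(2C)$ and the geometric-mean interpolation are slightly cleaner and in fact give (1.13) with $\alpha=1/2$.
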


\begin{proof} We suffice to consider the case of $\alpha=\frac{1}{4}$.
Let $x_0$ be the point where $|\tilde h|=|\tilde
h_t|=|\dot\varphi_t-c(t)|$ achieves its maximum. Choose a small
ball $B_r(x_0)$ for
$r=\min\{\text{diam}(M,g(t)),e^{-\frac{\theta}{2(n+1)}t}\}$. So we
have for any $x\in B_r(x_0)$,
\begin{align}
0\leq|\tilde h(x_0)|\leq |\tilde h(x)|+\|\nabla\tilde h\|r=|\tilde
h(x)|+\|\nabla h\|r.
\end{align}

Case 1), $e^{-\frac{\theta}{2(n+1)}t}\ge\text{diam}(M,g(t))$. Then
by Lemma 1.4, one sees
\begin{align}
\int_M |\tilde h(x_0)|^2\omega^n_\varphi & \leq 2\int_M|\tilde
h(x)|^2\omega^n_\varphi+ 2V\text{diam}(M,g(t))^2
\|\nabla \tilde h\|^2\notag\\
&\le Ce^{-\frac{\theta}{n+1}t}.\notag
 \end{align}
   Thus
 \beqn |\tilde h(x_0)|\le Ce^{-\frac{\theta}{2(n+1)}t}.\eeqn

Case 2), $e^{-\frac{\theta}{2(n+1)}t}\le\text{diam}(M,g(t))$. Then
\begin{align}
\frac{1}{V(B_r(x_0))}\int_{B_r(x_0)}|\tilde
h(x_0)|^2\omega^n_\varphi
&\leq\frac{2}{V(B_r(x_0))}\int_{B_r(x_0)}|\tilde h(x)|^2\omega^n_\varphi\notag\\
&+\frac{2}{V(B_r(x_0))}\int_{B_r(x_0)}\|\nabla \tilde h\|^2
r^2\omega^n_\varphi.\notag
\end{align}
Thus by Lemma 1.4, we get
\begin{align}
|\tilde h(x_0)|^2&\leq C e^{\frac{2n\theta}{2(n+1)}t}\int_M |\tilde h(x)|^2\omega^n_\varphi +Cr^2\notag\\
&\leq Ce^{-\frac{\theta}{n+1}t}.\notag
\end{align}
It follows
 \beqn |\tilde h(x_0)|\le C'e^{-\frac{\theta}{2(n+1)}t}.\eeqn
Therefore, both (1.15) and (1.16) give the estimate (1.12).

For any $x,y\in M$, by (1.12),  we have:  if
$\text{dist}(x,y)=\|x-y\|_{\omega_\varphi}\ge
e^{-\frac{\theta}{2(n+1)}t}$,
\begin{align}
&\frac{|\tilde h(x)-\tilde
h(y)|}{\|x-y\|^{\frac{1}{4}}_{\omega_{KE}}}\le 2 \frac{|\tilde
h(x)-\tilde
h(y)|}{\|x-y\|^{\frac{1}{4}}_{\omega_\varphi}}\notag\\
&\le  2|\tilde h(x)-\tilde h(y)|^{\frac{1}{2}}
\frac{ |\tilde h(x)-\tilde h(y)|^{\frac{1}{4}}_{\omega_\varphi}}{ \|x-y\|^{\frac{1}{4}}_{\omega_\varphi}}\notag\\
  &\le C\{\min\{(\frac{\epsilon_0}{2N})^2,Ce^{-\frac{\theta}{2(n+1)}t}\}\}^{\frac{1}{2}};
\end{align}
   if
$\text{dist}(x,y)\le e^{-\frac{\theta}{2(n+1)}t}$,
\begin{align}
&\frac{|\tilde h(x)-\tilde h
(y)|}{\|x-y\|^{\frac{1}{4}}_{\omega_{KE}}}\le 2 \frac{|\tilde
h(x)-\tilde h(y)|}{\|x-y\|^{\frac{1}{4}}_{\omega_\varphi}}\notag\\
&= 2\frac{|\tilde h(x)-\tilde h(y)|^{\frac{1}{2}}|\tilde
h(x)-\tilde h(y)|^{\frac{1}{2}}}
{\|x-y\|^{\frac{1}{2}}_{\omega_\varphi}}\|x-y\|^{\frac{1}{4}}_{\omega_\varphi}\notag\\
&\leq C|\tilde h|^{\frac{1}{2}}_{C^0}(\text{diam}(M,g(t)))^{\frac{1}{4}}\notag\\
&\leq
C'\{\min\{(\frac{\epsilon_0}{2N})^2,Ce^{-\frac{\theta}{2(n+1)}t}\}\}^{\frac{1}{2}}.
\end{align}
 Here we used Perelman's estimates again. (1.17) and (1.18) give the estimate (1.13).
\end{proof}

\begin{rem} We can  avoid to use Perelman's estimates to prove Lemma 1.5 by replacing to estimate the
$W^{k,2}$-norm of $\dot\varphi_t$. See   Appendix 1 in this paper.
\end{rem}

\begin{prop}    Choose some large $T$ such that
$$
C\frac{4(n+1)}{\theta}e^{-\frac{\theta}{2(n+1)}T}\leq\frac{\epsilon_0}{4N},
$$
where $C$ is  the constant chosen in Lemma 1.5. Then
\begin{align}
|\tilde \varphi|\leq\frac{3\epsilon_0}{4N}, ~~\forall
~t\in[0,T+\delta(T)),
\end{align}
 where
 $\tilde\varphi=\tilde\varphi_t=\varphi(t)-\frac{1}{V}\int_M\varphi\omega_\varphi^n.$
\end{prop}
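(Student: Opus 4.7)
The plan is to split $[0, T+\delta(T))$ at time $T$ and to exploit the elementary observation that $|\tilde\varphi_t(x)|\le osc(\varphi_t):=\max_M \varphi_t-\min_M \varphi_t$ for every $x\in M$, since the average $\underline\varphi(t)=\frac{1}{V}\int_M \varphi_t\,\omega_{\varphi_t}^n$ always lies between $\min_M \varphi_t$ and $\max_M \varphi_t$. For $t\in[0,T]$ hypothesis (1.4) immediately supplies $osc(\varphi_t)\le \frac{\epsilon_0}{4N}$, hence $|\tilde\varphi_t(x)|\le \frac{\epsilon_0}{4N}\le \frac{3\epsilon_0}{4N}$; so the substantive case is $t\in[T, T+\delta(T))$.

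For such $t$, the plan is to control $osc(\varphi_t)$ by a pairwise-difference argument. For any two points $x_1, x_2\in M$, integrating $\tfrac{\partial\varphi}{\partial s}=\dot\varphi$ from $T$ to $t$ at each fixed point yields
\[
\varphi(t,x_1)-\varphi(t,x_2)=\varphi(T,x_1)-\varphi(T,x_2)+\int_T^t\bigl[\dot\varphi(s,x_1)-\dot\varphi(s,x_2)\bigr]\,ds,
\]
and the bound $|\dot\varphi(s,x_1)-\dot\varphi(s,x_2)|\le 2\|\dot\varphi_s-c(s)\|_{C^0}$ followed by taking the supremum over $x_1, x_2$ produces
\[
osc(\varphi_t)\le osc(\varphi_T)+2\int_T^t\|\dot\varphi_s-c(s)\|_{C^0}\,ds.
\]
Lemma 1.5 then supplies the exponential decay $\|\dot\varphi_s-c(s)\|_{C^0}\le Ce^{-\frac{\theta}{2(n+1)}s}$, so
\[
2\int_T^t\|\dot\varphi_s-c(s)\|_{C^0}\,ds\le 2C\cdot\frac{2(n+1)}{\theta}e^{-\frac{\theta}{2(n+1)}T}=C\frac{4(n+1)}{\theta}e^{-\frac{\theta}{2(n+1)}T}\le \frac{\epsilon_0}{4N},
\]
where the last inequality is precisely the hypothesis on $T$. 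Combined with $osc(\varphi_T)\le \frac{\epsilon_0}{4N}$ from the first regime, this gives $osc(\varphi_t)\le \frac{\epsilon_0}{2N}$ and hence $|\tilde\varphi_t(x)|\le \frac{\epsilon_0}{2N}\le \frac{3\epsilon_0}{4N}$ as claimed.

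The main subtlety to navigate is that one cannot straightforwardly differentiate $\max_M\varphi_t$, $\min_M\varphi_t$, or the $\omega_{\varphi_t}^n$-weighted average $\underline\varphi(t)$ directly in $t$, since both the extrema of $\varphi_t$ (potentially attained at moving points) and the defining measure $\omega_{\varphi_t}^n$ vary with time. The pairwise-difference reformulation bypasses both obstructions by using only $\tfrac{\partial\varphi}{\partial t}=\dot\varphi$ at each fixed point $x$; once this is in place, the bound is an immediate consequence of the exponential decay from Lemma 1.5 and the specific choice of $T$, with the factor $\frac{4(n+1)}{\theta}$ in the hypothesis accounting for the factor $2$ from the triangle inequality and the factor $\frac{2(n+1)}{\theta}$ from integrating the exponential.
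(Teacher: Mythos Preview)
Your proof is correct and in fact slightly cleaner than the paper's, but the route is genuinely different. The paper differentiates $\tilde\varphi_t$ directly in $t$: since the normalization uses the time-dependent measure $\omega_{\varphi_t}^n$, one picks up an extra term,
\[
\frac{d}{dt}\tilde\varphi=\tilde h-\frac{1}{V}\int_M \tilde h\,\triangle_\varphi\varphi\,\omega_\varphi^n,
\]
and then integrates from $T$ to $t$, bounding the second integral using $|\triangle_\varphi\varphi|=O(\epsilon_0)$ (which requires $\varphi_t\in\mathcal{K}(\epsilon_0)$). You sidestep this entirely by passing to the oscillation: the inequality $|\tilde\varphi_t|\le osc(\varphi_t)$ lets you work with pairwise differences $\varphi(t,x_1)-\varphi(t,x_2)$, whose $t$-derivative is simply $\dot\varphi(t,x_1)-\dot\varphi(t,x_2)$, with no measure term to track. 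The payoff is that your argument needs only Lemma~1.5 and the hypothesis on $T$, not the $C^2$ control of $\varphi_t$; it also yields the marginally sharper bound $|\tilde\varphi_t|\le\frac{\epsilon_0}{2N}$, while the paper's extra term accounts for why they land at $\frac{3\epsilon_0}{4N}$.
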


\begin{proof}
Notice that
$$
\frac{d}{dt}\tilde\varphi=\tilde h-\frac{1}{V}\int_M\tilde
h\triangle_\varphi\varphi\omega_\varphi^n.
$$
Then by Lemma 1.5, we have
\begin{align}
\tilde\varphi&=\tilde\varphi_T+\int_T^{T+\delta(T)}\tilde h dt
-\int_T^{T+\delta(T)}\frac{1}{V}\int_M\tilde h\triangle_\varphi\varphi\omega_\varphi^ndt\notag\\
&\leq\frac{\epsilon_0}{2N}+C\int_T^{T+\delta(T)}e^{-\frac{\theta}{2(n+1)}t}dt+
2C\epsilon_0\int_T^{T+\delta(T)}e^{-\frac{\theta}{2(n+1)}t}dt\notag\\
&\leq\frac{\epsilon_0}{2N}+2C\frac{2(n+1)}{\theta}e^{-\frac{\theta}{2(n+1)}T}\notag.
\end{align}
\end{proof}

\begin{proof} [Proof of Theorem 1.1]
 First we want to show that $\varphi_t\in\mathcal{K}(\epsilon_0)$
for any $t>0$.  By the contradiction, we may assume that there
exists a number $\delta(T)<\infty$ such that
$\varphi_t\in\mathcal{K}(\epsilon_0)$ for any $t<T+\delta(T)$ and
there exists a sequence of $t_i\to T+\delta(T)$ such that
\begin{align}\label{assume}
\|\overline{\varphi_{t_i}}\|_{C^{2,\alpha}}=\|\varphi_{t_i}-\underline{\varphi_{t_i}}\|_{C^{2,\alpha}}\rightarrow\epsilon_0.
\end{align}
 Let $b_t$ be a constant  so that
$\overline\varphi=\tilde\varphi+b_t$. Then by  Proposition 1.7, it
is easy to see  $b_t\le\frac{2\epsilon_0}{N}$. Decompose
$\overline\varphi$ by $\overline\varphi=\phi+\phi^\bot$, where
$\phi\in\Lambda_1(M,\omega_{KE})$ and
$\phi^\bot\in\Lambda_1^\bot(M,\omega_{KE})$, where
$\Lambda_1^\bot(M,\omega_{KE})$ is a subspace of $L^2$-integral
functions   which are orthogonal to $\Lambda_1(M,\omega_{KE})\cup
\Bbb R$. Thus $\phi=\sum_i a_i\theta_i$ for some constants $a_i$,
where $\theta_i$ is a basis of the space
$\Lambda_1(M,\omega_{KE})$.  As a consequence, by Proposition 1.7,
we have $|a_i|\le\frac{2\epsilon_0}{N}$, so
 \beqn \|\phi\|_{C^{2,\alpha}}\le \frac{A_0\epsilon_0}{N},\eeqn
for some uniform constant $A_0$.

By equation (1.1), we have
\begin{align}
\omega_\varphi^n=\omega_{KE}^ne^{\tilde{h}+\overline\varphi+a},
\end{align}
 where $\tilde h=\dot\phi_t-c_t$  and $a=a_t$ are constants. By Lemma 1.5 and Proposition 1.7, it is easy to see
 that $|a|\le \frac{4A_0\epsilon_0}{N}$. Let $P$ be a projection from Banach space
 $H^{2,\alpha}(M)$ to Banach space
  $H^\alpha(M)\cap \Lambda_1^\bot(M,\omega_{KE})$.
  Then    $\phi^\bot$ is a solution of equation
 $$P[\log(\frac{[\omega_{\phi+\phi^\bot}]^n}{\omega_{KE}^n})]-\phi^\bot=P(\tilde
 h+a),$$
  where $\phi$ and $\tilde h+a$ are regarded as two perturbation functions.
  On the other hand,
  by Lemma 1.5, we have
  $$\|P(\tilde
 h+a)\|_{C^\alpha}=\|P(\tilde
 h)\|_{C^\alpha}\le
 C
 \min\{(\frac{2\epsilon_0}{N})^2,Ce^{-\frac{\theta}{2(n+1)}t}\}^{\frac{1}{2}}.$$
  Thus by using  the Implicity Functional Theorem, we get
\begin{align}
\|\phi^\bot\|_{C^{2,\alpha}}\leq c=O(\frac{\epsilon_0}{N}),
\end{align}
where constant $c$ is independent  of $t$ and $\epsilon_0$ and
goes to zero as $N\to \infty$. Consequently,  $c\le
\frac{\epsilon_0}{4}$ by choosing a large number $N$. Hence by
combining (1.21) and (1.23), we obtain
\begin{align}
\|\overline\varphi\|_{C^{2,\alpha}}\leq\frac{\epsilon_0}{2}.\forall
~~t\in[T,T+\delta(T)).
\end{align}
But this is impossible according to (1.20). Therefore we prove
that $\varphi_t\in\mathcal{K}(\epsilon_0)$ for any $t>0$.

By the above argument and lemma 1.5 and  Proposition 1.7, we
conclude that there exists an $\epsilon$ such that  if
$\|\psi-\underline{\psi}\|_{C^{2,\alpha}}\le \epsilon$, then for
any $t>0$, we have
 \begin{align} &\text{a)}~~\varphi_t\in\mathcal{K}(\epsilon_0),\\
&\text{b)}~~|\tilde\varphi|\le\frac{3\epsilon_0}{4N},\\
&\text{c)}~~\|\tilde h\|_{C^\alpha}\leq
C\{\min\{(\frac{\epsilon_0}{2N})^2,Ce^{-\frac{\theta}{2(n+1)}t}\}\}^{\frac{1}{2}}.
\end{align}
  On the other hand,  according to
 [BM], one can choose an element $\sigma_t\in\text{Aut}_0(M)$ for each $\varphi$ such
 that  potential $(\varphi_{\sigma}-\underline{\varphi_{\sigma}})$
lies in $\Lambda_1^\bot(M,\omega_{KE})$, where
$\varphi_{\sigma}=\varphi_{\sigma_t}=\varphi_t(\sigma_t(.))+\rho_t(.)$
and $\rho_t$ is K\"ahler potential defined by (1.2). Furthermore,
by the fact $\varphi\in \mathcal{K}(\epsilon_0)$, one can prove
easily
$$\text{dist}(\sigma,Id)\le 1.$$
Consequently, by (1.27), we have
$$\|\tilde h(\sigma_t(.))\|_{C^{\alpha}}\le C e^{-\frac{\theta}{(n+1)}t}.$$
Thus by applying  the Implicity Functional Theorem to the modified
equation of (1.22),
$$\omega_{\varphi_\sigma}^n=\omega_{KE}^ne^{\tilde
h(\sigma_t(.))-\varphi_\sigma+a},$$
 we have
$$\|\varphi_{\sigma}-\underline{\varphi_{\sigma}}\|_{C^{2,\alpha}}\leq C(\|\tilde
h(\sigma_t(.))\|_{C^{\alpha}}).$$
 Furthermore, one can get an explicit estimate
$$\|\varphi_{\sigma}-\underline{\varphi_{\sigma}}\|_{C^{2,\alpha}}\leq 2\|\tilde
h(\sigma_t(.))\|_{C^{\alpha}}\le C' e^{-\frac{\theta}{(n+1)}t}.$$

To get higher-order estimates for the modified K\"ahler potentials
$\varphi_\rho$, one can use Lemma 4.1 in Appendix 1 and the
embedding theory of Sobolev spaces to obtain
 $$\|\tilde{\dot\varphi}\|_{C^{k,\alpha}}\le C_k
 e^{-\frac{\theta}{n+1}t},~~\forall ~t>0,$$
 where constants $C_k$ depends only on $k, \epsilon_0$ and higher-order derivatives of the initial
 K\"ahler potential $\psi$ (we may assume that $\psi$ is smooth since we can replace it by
 an evolved K\"ahler metric $\varphi_{t=1}$). Then by the Implicity Functional Theorem as the above,
we derive
$$\|\varphi_{\sigma}-\underline{\varphi_{\sigma}}\|_{C^{k+2,\alpha}}\leq 2\|\tilde
{\dot\varphi}\|_{C^{k,\alpha}}\le C_k' e^{-\frac{\theta}{n+1}t}.$$
Therefore we prove that K\"ahler metrics
$\sigma^*(\omega_{\varphi})$ converge exponentially to
$\omega_{KE}$.

\end{proof}

\section{In case of K\"ahler-Ricci soliton}

In this section, we assume that $M$ admits a K\"ahler Ricci
soliton $(\omega_{KS}, X_0)$  with some holomorphic vector field
$X_0$ on $M$, i.e., $(\omega_{KS},X_0)$ satisfies equation,
$$
\text{Ric}(\omega_{KS})-\omega_{KS}=L_{X_0}\omega_{KS},$$
 where $L_{X_0}$ is a Lie derivative along the vector field $X_0$.
By the Hodge theorem, one can define  a real-valued potential
$\theta_X$ of $X_0$ by
\begin{align} &L_{X_0}\omega_{KS}=\sqrt{-1}\partial\overline\partial\theta_X,\notag\\
&\int_M e^{\theta_X}\omega_{KS}^n=\int_M\omega_{KS}^n.\notag
\end{align}
So if we let $X$ and $X'$  be a real part and imaginary part of
$X_0$, respectively, then for any $\phi\in
\mathcal{M}(\omega_{KS})$, we  have
\begin{align}
L_{X_0}\omega_{\phi}=\sqrt{-1}\partial\overline\partial(\theta_X+X_0(\phi)),
\end{align}
 and so
$$L_{X}\omega_{\phi}=\sqrt{-1}\partial\overline\partial(\theta_X+X(\phi))$$
 and
$$L_{X'}\omega_{\phi}=\sqrt{-1}\partial\overline\partial(X'(\phi)).$$
  (2.1) also implies that for any $\psi\in C^\infty(M)$ it holds
$$<\overline\partial( \theta_X+X_0(\phi)),
\overline\partial\psi>_{\omega_\phi}=X_0(\psi)=X(\psi)+\sqrt{-1}X'(\psi).$$
 Thus
 \begin{align}|<\nabla(
 \theta_X+X(\phi)),\nabla\psi>_{\omega_\phi}-X(\psi)|\le
 |X'(\phi)|\|\nabla\psi\|_{\omega_\phi}.\end{align}

 We  now consider a modified equation of (0.1),
\begin{align}
&\frac{\partial g(t,\cdot)}{\partial t}=-\text{Ric}(g)+g+L_X g,\notag\\
&g(0)=g.
\end{align}
Then  (2.3) is equivalent to  a parabolic equation of complex
Monge-Amp\`ere type,
\begin{align}
&\frac{\p\varphi}{\p
t}=\log\frac{\omega^n_\varphi}{\omega_{KS}^n}+\varphi+X(\varphi), \notag\\
&\varphi(0)=\psi-\underline{\psi},
\end{align}
where $\underline\psi=\frac{1}{V}\int_M\psi\omega_{KS}^n$,
$V=\int_M\omega_{KE}^n$, and $\varphi=\varphi_t$ are potentials of
evolved K\"ahler metrics $g_t$ of (2.3).  Let $K_{X_0}$ be an
one-parameter compact subgroup of $\text{Aut}_0(M)$ generated by
the imaginary part $X'$ of $X_0$. By choosing a reductive subgroup
$\text{Aut}_r(M)$ of $\text{Aut}_0(M)$ such that $\text{Aut}_r(M)$
contains $K_{X_0}$, we can prove

\begin{theo} Let $M$ be  a compact K\"ahler manifold $M$ with
$c_1(M)>0$ which admits a K\"ahler-Ricci soliton $\omega_{KS}$.
Then there exists a small $\epsilon$ such that for  any initial
data, potential $\psi\in\mathcal{M}(\omega_{KS})$ in equation
(2.4) with $\psi\in \mathcal{K}(\epsilon)$, there exist a family
of $\sigma=\sigma_t\in\text{Aut}_r(M)$ for evolved K\"ahler
potentials $\varphi=\varphi_t$ of (2.4) at $t$  such that K\"ahler
potentials $(\varphi_\sigma-\underline{\varphi_\sigma})$ are
$C^k$-norm uniformly bounded, where
$\varphi_\sigma=\sigma^*\varphi+\rho$ and $\rho=\rho_t$ are
K\"ahler potentials defined by
 \begin{align}&\rho^*(\omega_{KS})=\omega_{KS}+\sqrt{-1}\partial\overline\partial\rho,\notag\\
 &\int_M e^{-\rho-X_0(\rho)}\omega_{KS}^n=\int_M\omega_{KS}^n.
 \end{align}
As a consequence, evolved K\"ahler metrics $g_t$ of (2.3) converge
to $g_{KS}$ smoothly in the sense of Cheeger-Gromov. Furthermore,
if in addition that $\psi$ is $K_{X_0}$-invariant, then  there
exist a family of $\sigma=\sigma_t\in\text{Aut}_r(M)$ such that
$(\varphi_\sigma-\underline{\varphi_\sigma})$ converge
exponentially to $0$ as $t\to \infty$, and consequently K\"ahler
metrics $\sigma^*(\omega_{\varphi})$ converge exponentially to
$\omega_{KS}$.
\end{theo}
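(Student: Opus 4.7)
The plan is to imitate the scheme of Section~1 with three substantive modifications: replace the ordinary Laplacian by the weighted drift operator $P$ of Lemma~2.2 (which is self-adjoint with respect to the measure $e^{\theta_X+X_0(\varphi)}\omega_\varphi^n$), replace $\text{Aut}_0(M)$ by the reductive subgroup $\text{Aut}_r(M)$ in the gauge-fixing step, and use the generalized Futaki invariant in place of the classical one. As in Section~1 I would run a continuity/maximality argument: fix $\epsilon_0$ small and $N,T$ large, choose $\epsilon$ so small that $\|\psi-\underline\psi\|_{C^{2,\alpha}}\le\epsilon$ forces $\varphi_t\in\mathcal K(\epsilon_0/2)$ and the analogues of (1.4) on $[0,T]$, and let $\delta(T)$ be the maximal time for which $\varphi_t\in\mathcal K(\epsilon_0)$.

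The analytic core is the decay of the weighted $L^2$-energy
\begin{align*}
H(t)=\frac{1}{V}\int_M|\dot\varphi_t-c(t)|^2\,e^{\theta_X+X_0(\varphi_t)}\omega_{\varphi_t}^n,
\end{align*}
with $c(t)$ chosen so that $\dot\varphi_t-c(t)$ is orthogonal to constants against the weighted measure. Differentiating along (2.4), using $\ddot\varphi=\Delta_\varphi\dot\varphi+X(\dot\varphi)+\dot\varphi$ together with (2.2), one obtains an inequality of the form
\begin{align*}
\frac{d}{dt}H(t)\le 2H(t)-2(1-C\epsilon_0)\int_M\|\nabla(\dot\varphi-c(t))\|^2_{\omega_\varphi}\,e^{\theta_X+X_0(\varphi)}\omega_\varphi^n,
\end{align*}
and the Dirichlet term is bounded below by the first non-trivial eigenvalue of $(P,\omega_\varphi)$. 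By Lemma~2.2 this eigenvalue is $\ge 1$ with equality exactly on the space spanned by $\{\theta_Y+Y(\varphi)\mid Y\in\eta(M)\}$, and the spectral gap persists under $C^{2,\alpha}$-perturbations of $\omega_{KS}$. The generalized Futaki invariant vanishes on all of $\eta(M)$, which, combined with the almost-orthonormality lemma in the appendix, yields the analogue of (1.9) forcing the first-eigenspace component of $\dot\varphi-c(t)$ to be $O(\epsilon_0)$ times its full weighted $L^2$-norm; this produces $\frac{d}{dt}H\le -\theta H$ for a uniform $\theta>0$.

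From the weighted $L^2$-decay I would derive pointwise and H\"older decay of $\dot\varphi-c(t)$ by combining Perelman's gradient bound and non-collapsing estimate (Lemma~1.4, whose proof carries over to (2.3) essentially unchanged) exactly as in Lemma~1.5, and integrate the evolution of $\tilde\varphi_t$ as in Proposition~1.7 to get a uniform pointwise bound on $\tilde\varphi_t$. Applying the Implicit Function Theorem to the modified Monge--Amp\`ere equation
\begin{align*}
\omega_\varphi^n=\omega_{KS}^n\,e^{\tilde h+\overline\varphi+X(\varphi)+a_t},
\end{align*}
and taking $N$ large forces $\|\overline\varphi\|_{C^{2,\alpha}}\le\epsilon_0/2$ on $[0,T+\delta(T))$, contradicting the maximality of $\delta(T)$ unless $\delta(T)=\infty$. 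The gauge-fixing is then performed exactly as in Section~1: for each $t$ select $\sigma_t\in\text{Aut}_r(M)$ via the almost-orthonormality appendix so that $\varphi_\sigma-\underline{\varphi_\sigma}$ lies in the weighted $L^2$-orthogonal complement of the first eigenspace of $(P,\omega_{KS})$; bootstrapping the $W^{k,2}$-estimate of the other appendix together with Sobolev embedding then yields uniform $C^k$ bounds for $\varphi_\sigma-\underline{\varphi_\sigma}$.

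The hard part will be the last step: $\text{Aut}_r(M)$ does not exhaust all of $\text{Aut}(M)$, so the potential directions along non-reductive (unipotent) holomorphic vector fields cannot be absorbed into $\sigma_t$, and for these $P$ has zero modes that are not killed by Futaki-type vanishing. This is precisely why, without $K_{X_0}$-invariance, one only obtains uniform $C^k$-boundedness of $\varphi_\sigma-\underline{\varphi_\sigma}$ rather than decay, in accordance with Proposition~2.10. When $\psi$ is $K_{X_0}$-invariant the flow preserves this symmetry, every step above restricts to the $K_{X_0}$-invariant Banach sector in which $\text{Aut}_r(M)$ does account for all invariant holomorphic directions, and the inequality $\frac{d}{dt}H\le-\theta H$ upgrades to genuine exponential decay; propagating this through the Implicit Function Theorem as at the end of Section~1 gives $\|\varphi_\sigma-\underline{\varphi_\sigma}\|_{C^{k,\alpha}}\le C_k e^{-\theta t/(n+1)}$, and hence exponential Cheeger--Gromov convergence $\sigma^*\omega_\varphi\to\omega_{KS}$.
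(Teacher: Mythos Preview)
Your proposal has a genuine gap at the decay step. In the soliton case the analogue of (1.9) does \emph{not} hold as stated: when $\psi$ is not $K_{X_0}$-invariant, the generalized Futaki identity applied to $\omega_\varphi$ involves $X_0(\varphi)=X(\varphi)+\sqrt{-1}X'(\varphi)$, and the imaginary-part term $X'(\varphi)$ produces an additive error $O(\epsilon_0^2)$ in the projection onto the first eigenspace of $(P,\omega_\varphi)$ (this is Lemma~2.3, equation (2.11)). Consequently the differential inequality is only
\[
\frac{d}{dt}\tilde H_0(t)\le -\theta\,\tilde H_0(t)+B_0\epsilon_0^3,
\]
so $\tilde H_0$ decays exponentially only down to a floor of order $\epsilon_0^3$ (Lemma~2.4), and the same happens for the $C^\alpha$-norm of $\tilde{\dot\varphi}$ (Corollary~2.7). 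With this floor the Section~1 continuity/maximality argument you outline cannot close: integrating in $t$ would not keep $\tilde\varphi$ small, and $\delta(T)=\infty$ does not follow. The paper instead replaces the continuity argument by an iterative restart scheme (Proposition~2.10): flow for a fixed time $T_N$, apply a gauge $\sigma\in\text{Aut}_r(M)$ from Appendix~2 to push the potential back into $\mathcal K(\epsilon_0/N)$, and repeat; the geometric shrinking at each step absorbs the $\epsilon_0^{3/2}$ floor. Only in the $K_{X_0}$-invariant case does $X'(\varphi)\equiv 0$ kill the error (Remarks~2.5, 2.8), and then your pure exponential argument goes through.

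Two further points. First, your diagnosis that non-reductive holomorphic directions cause the obstruction is not quite right: by Lemma~2.2, $\ker(P,\omega_{KS})\cong\eta_r(M)$, so the first eigenspace corresponds exactly to the reductive directions and is fully handled by $\text{Aut}_r(M)$; the real culprit is the $X'(\varphi)$ term above. Second, in the general case uniform $C^k$ bounds on $\varphi_\sigma-\underline{\varphi_\sigma}$ do not by themselves give Cheeger--Gromov convergence; the paper passes through a smoothed family $\overline\sigma_t$, applies parabolic regularity to (2.29), and then uses Perelman's $W$-functional together with uniqueness of K\"ahler--Ricci solitons to identify every subsequential limit with a pullback of $\omega_{KS}$.
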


As in Section 1, to prove Theorem 2.1,  we need to estimate
$\dot\varphi$ of K\"ahler potentials $\varphi=\varphi_t$ of (2.4).
We introduce a modified functional of $H_0(t)$   by
$$\tilde H_0(t)=\frac{1}{V}\int_M(\dot\varphi-c(t))^2
e^{\tilde h}\omega^n_\varphi,$$
 where  $c(t)=\int_M\dot\varphi e^{\tilde h}\omega_\varphi^n$ is a constant, $\tilde h=\tilde h_t=\theta_X+X(\varphi)-\dot\varphi$
  and  $V=\int_M\omega_{KS}^n$.
 By  a direct computation, one shows
\begin{align}
 &\frac{d}{dt}\tilde H_0(t)
=2\frac{1}{V}\int_M(\dot\varphi-c(t))(\ddot\varphi
-\dot c(t))e^{\tilde h}\omega^n_\varphi\notag\\
&+\frac{1}{V}\int_M(\dot\varphi-c(t))^2(\triangle_\varphi\dot\varphi+X(\dot\varphi)-\ddot\varphi)e^{\tilde h}\omega_\varphi^n\notag\\
&=2\frac{1}{V}\int_M(\dot\varphi-c(t))(\triangle_\varphi\dot\varphi+\dot\varphi+X(\dot\varphi)-c(t))
e^{\tilde h}\omega^n_\varphi\notag\\
&+\frac{1}{V}\int_M(\dot\varphi-c(t))^2(\triangle_\varphi\dot\varphi+X(\dot\varphi)-\ddot\varphi)e^{\tilde h}\omega_\varphi^n\notag\\
&=2\frac{1}{V}\int_M
[\dot\varphi-c(t)](\triangle_\varphi\dot\varphi+X(\dot\varphi))e^{\tilde
h} \omega^n_\varphi\notag\\
&+2\frac{1}{V}\int_M(\dot\varphi-c(t))^2(1+\dot\varphi)e^{\tilde
h}\omega^n_\varphi.
\end{align}
On the other hand, by (2.2), we see
 \beqs &&\int_M
[\dot\varphi-c(t)](\triangle_\varphi\dot\varphi+X(\dot\varphi))e^{\tilde
h} \omega^n_\varphi\\
&&=-\int_M \|\nabla(\dot\varphi-c(t))\|^2 e^{\tilde h}
\omega^n_\varphi\notag\\
&&+ \int_M
[\dot\varphi-c(t)][X(\dot\varphi)-<\nabla(\theta_X+X(\phi)-\dot\varphi),\nabla\dot\varphi>]e^{\tilde
h} \omega^n_\varphi\\
&&\le -\int_M \|\nabla(\dot\varphi-c(t))\|^2 e^{\tilde h}
\omega^n_\varphi\notag+ V|\dot\varphi-c(t)|
|X'(\dot\varphi)|\|\nabla{\dot\varphi}\|. \eeqs
 Thus inserting the above inequality into (2.6), we get
\begin{align}
& \frac{d}{dt}\tilde H_0(t) \le
2\frac{1}{V}\int_M(\dot\varphi-c(t))^2(1+\dot\varphi)e^{\tilde
h}\omega^n_\varphi\notag \\
& -2\frac{1}{V}\int_M \|\nabla(\dot\varphi-c(t))\|^2 e^{\tilde h}
\omega_\varphi^n\notag\\
&+ 2|\dot\varphi-c(t)| |X'(\dot\varphi)|\|\nabla{\dot\varphi}\|.
 \end{align}

We  shall estimate the $L^2$-integral of $\nabla\dot\varphi$ and
need the following lemma,

\begin{lem} Let $P=(P,\omega_\phi)$ be an elliptic operator on  $C^{k,\alpha}(M)$ defined by
$$P\psi=\triangle \psi+\psi+\text{Re}<\overline\partial{ h}, \overline\partial\psi>_{\omega_\phi},$$
where $\triangle$ is the Lapalace operator with respect to a
K\"ahler metric $\omega_{\phi}$ and $h$ is a Ricci potential of
$\omega_{\phi}$. Then $\text{ker}(P,\omega_\phi)
\subset\eta_r(M),$
 where $\eta_r(M)$ is a reductive part of Lie algebraic $\eta(M)$
consisting of all holomorphic vector fields on $M$. Moreover, if
$\omega_{\phi}=\omega_{KS}$, then $\text{ker}(P,\omega_{KS})
\cong\eta_r(M)$.
\end{lem}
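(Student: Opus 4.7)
The proof naturally decomposes into a weighted Bochner identity plus a Matsushima-type argument. I would proceed as follows.

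\textbf{Step 1 (weighted self-adjointness).} Observe first that the operator $P-\text{Id}$, whose principal part is the drift Laplacian $\triangle_\phi+\text{Re}\langle\b\p h,\b\p\,\cdot\,\rangle_{\omega_\phi}$, is symmetric with respect to the weighted measure $e^h\omega_\phi^n$: the first-order drift term in $P$ is precisely what arises when moving $\triangle_\phi$ across the weight $e^h$ in integration by parts. Thus the spectral theory of $P$ is naturally performed in $L^2(e^h\omega_\phi^n)$, and $\ker P$ is a finite-dimensional space of smooth functions by elliptic regularity.

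\textbf{Step 2 (Bochner identity, $\ker P\subset\eta(M)$).} Let $\psi\in\ker P$ be real and set $V=\nabla^{1,0}\psi=g^{i\b j}\psi_{\b j}\p_i$; the goal is to show $\b\p V=0$. I would differentiate $P\psi=0$ in $\p_{\b k}$, commute covariant derivatives using the K\"ahler--Ricci identity $R_{i\b j}=g_{i\b j}+h_{i\b j}$, and then pair the resulting equation against $g^{k\b l}\psi_k$ with the weight $e^h\omega_\phi^n$. A careful integration by parts, in which the drift contributions cancel exactly the terms produced when passing $e^h$ through derivatives, yields
$$\int_M\|\b\p V\|^2_{\omega_\phi}\,e^h\omega_\phi^n=\int_M|P\psi|^2\,e^h\omega_\phi^n.$$
Since $P\psi=0$, this forces $\b\p V\equiv 0$, so $V$ is a holomorphic vector field, i.e., $V\in\eta(M)$.

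\textbf{Step 3 (landing in $\eta_r(M)$).} Since $\psi$ is real, $\text{Im}(V)$ is the Hamiltonian vector field of $\psi$ with respect to $\omega_\phi$; in particular $L_{\text{Im}(V)}\omega_\phi=0$, so $\text{Im}(V)$ is a real Killing field for $\omega_\phi$. A holomorphic vector field whose imaginary part is Killing for a K\"ahler metric in $2\pi c_1(M)$ belongs to the reductive part $\eta_r(M)$ by the Matsushima decomposition applied to the isometry group of $(M,\omega_\phi)$. This proves $\ker(P,\omega_\phi)\subset\eta_r(M)$.

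\textbf{Step 4 (equality at $\omega_{KS}$ and main obstacle).} For the converse when $\omega_\phi=\omega_{KS}$, each $V\in\eta_r(M)$ decomposes into real and imaginary Killing parts of reductive holomorphic fields; since $H^{0,1}(M)=0$ on a Fano manifold and $\omega_{KS}$ is invariant under a maximal compact in $\text{Aut}_r(M)$, each such Killing field is $\nabla^{1,0}$ of a real potential. A direct calculation using $\text{Ric}(\omega_{KS})-\omega_{KS}=\sqrt{-1}\p\b\p\theta_X$ and $X_0=\nabla^{1,0}\theta_X$ then shows that the resulting potential $\psi_V$ satisfies $P\psi_V=0$, giving the reverse inclusion. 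The main obstacle I expect is Step 2 for an arbitrary $\omega_\phi$ (not a soliton): one must verify that the cross terms generated by commuting covariant derivatives through the drift, and those generated by moving the weight $e^h$ across derivatives, conspire to cancel exactly, so that the Bochner identity becomes a true equality rather than an inequality whose defect would obstruct the conclusion $\b\p V=0$.
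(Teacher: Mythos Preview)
Your overall strategy is correct and close in spirit to the paper's, but the execution in Step 2 differs from the paper in a way that matters. The clean weighted Bochner identity you write, $\int_M\|\b\p V\|^2 e^h\omega_\phi^n=\int_M|P\psi|^2 e^h\omega_\phi^n$, holds for the \emph{complex} drift operator $L\psi=\triangle\psi+\psi+\langle\b\p h,\b\p\psi\rangle_{\omega_\phi}$, not obviously for $P$, which carries only the real part of the drift; this is exactly the cancellation you flag as the ``main obstacle,'' and it does not go through as stated. The paper avoids this by writing $P=\tfrac{1}{2}(L+\bar L)$ and using, for each of $L$ and $\bar L$ separately, the integration-by-parts identity $-\int(L\psi)\psi\,e^h\omega_\phi^n=\int(\|\nabla\psi\|^2-\psi^2)e^h\omega_\phi^n$ together with the weighted Bochner inequality (the first nonzero eigenvalue of the drift Laplacian is $\ge 1$, with equality iff $\nabla^{1,0}\psi$ is holomorphic). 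Summing gives $-2\int(P\psi)\psi\,e^h\omega_\phi^n\ge 0$, so $P\psi=0$ forces equality and hence $\b\p V=0$. This split is what makes the cross terms disappear for free; you should adopt it rather than attempt the identity directly for $P$.

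Your Step 3 is fine and is the same content as the paper's one-line remark that a holomorphic vector field with a real potential lies in $\eta_r(M)$. For Step 4 the paper is more direct than your sketch: it uses that $X_0$ lies in the center of $\eta_r(M)$ (so $[X_0,Y]=0$ for $Y\in\eta_r(M)$) and that $h=\theta_{X_0}$ at $\omega_{KS}$, from which a short computation shows the potential $\theta_Y'$ of any $Y\in\eta_r(M)$ satisfies $P\theta_Y'=0$. You do not need to pass through Matsushima's decomposition or maximal compacts here; centrality of $X_0$ is the key structural input.
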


\begin{proof} Let $L\psi=\triangle \psi+\psi+<\overline\partial{ h}, \overline\partial\psi>_{\omega_\phi}$
 and $\overline L\psi=\triangle
\psi+\psi+\overline{<\overline\partial{ h},
\overline\partial\psi>_{\omega_\phi}}$, where $h$ is a Ricci
potential of the metric $\omega_\phi$. Then by the Bochner
formula, one can show (cf. Lemma 3.1 in [TZ3]),
 \begin{align} \int_M -(L\psi)\psi e^{
h}\omega_{\phi}^n=\int_M (\|\nabla\psi\|^2-\psi^2)e^{
h}\omega_{\phi}^n\ge 0,
\end{align}
 and
\begin{align}
 \int_M -(\overline L\psi)\psi e^{
h}\omega_{\phi}^n=\int_M (\|\nabla\psi\|^2-\psi^2)e^{
h}\omega_{\phi}^n\ge 0.\end{align}
 Moreover, the equality (2.8) or (2.9) holds if and only if
the corresponding vector field of  $(0,1)$-form
$\overline\partial\phi$ is holomorphic.
 Thus
 \begin{align} &-2\int_M (P\psi)\psi e^{ h}\omega_{\phi}^n
 =-\int_M (L\psi+\overline L\psi)\psi e^{ h}\omega_{\phi}^n\notag\\
 &=2\int_M (\|\nabla\psi\|^2-\psi^2)e^{ h}\omega_{\phi}^n\ge 0,
 \end{align}
and the equality holds if and only if the corresponding vector
field of  $(0,1)$-form $\overline\partial\psi$ is holomorphic.
Since $\psi$ is a real-valued function the corresponding vector
field must lie in $\eta_r(M)$. Furthermore, if one defines a
potential $\theta_Y'$ by
 $$L_{Y}\omega_{KS}=\sqrt{-1}\partial\overline\partial\theta_Y'~~\text{and}~~
\int_M \theta_Y'e^{\theta_X}\omega_{KS}^n=0,
$$
for  an element $Y$ in $\eta_r(M)$,   then in case of
$\omega_{\phi}=\omega_{KS}$, by using the fact that $X_0$ is an
element of center of $\eta_r(M)$ [TZ1] and $h=\theta_{X_0}$, one
can show $\theta_Y'$ must be in $\text{ker}(P,\omega_{KS})$.
\end{proof}

Set a Banach space by
$$\overline{\mathcal{K}}(\epsilon_0)=\{\phi\in\mathcal{M}(\omega_{KS})|~~\|\phi-\underline\phi\|_{C^{2,\alpha}}\leq\epsilon_0\}.
$$
Then we have

\begin{lem} Let $\varphi=\varphi_t$ be an evolved K\"ahler potential of (2.4) at $t$
and $\theta_Y'\in \text{ker}(P,\omega_\varphi)$ be a potential of
$Y\in\eta_r(M)$ with $\int_M \|Y\|^2\omega_{KS}^n=1$.  If
$\varphi\in\overline{\mathcal{K}}(\epsilon_0)$,  then there exist
two uniform constants $C_1$ and $C_2$ such that
\begin{align}&|\int_M \theta_Y'
(\dot\varphi_t-c(t))e^{\theta_X+X(\varphi)}\omega_\varphi^n|\notag\\
& \le
C_1\epsilon_0\int_M|\dot\varphi_t-c(t)|e^{\theta_X+X(\varphi)}\omega_\varphi^n+C_2\epsilon_0^2.
\end{align}

\end{lem}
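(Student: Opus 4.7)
The plan is to mimic the derivation of the K\"ahler-Einstein estimate (1.9): there the vanishing of the classical Futaki invariant produced $\int_M\triangle(\theta_X+X(\varphi))(\dot\varphi-c(t))\omega_\varphi^n=0$, which, after replacing the evolved potential $\theta_X+X(\varphi)$ by the background potential $\theta_X$ with an $O(\epsilon_0)$-small error, yielded the orthogonality (1.9). Here the analogous mechanism is provided by the modified Futaki invariant of Tian-Zhu, $F_{X_0}(Y)=\int_M Y(\tilde h)\,e^{\tilde h}\omega_\varphi^n$, which vanishes on the reductive subalgebra $\eta_r(M)$ and is independent of the K\"ahler metric in $2\pi c_1(M)$; this metric independence is precisely what allows us to dispense with any $K_{X_0}$-invariance hypothesis on $\psi$.

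First I select the real-valued $\omega_\varphi$-potential $\theta_{Y,\varphi}$ of $Y$, normalized by $\int_M\theta_{Y,\varphi}\,e^{\theta_X+X(\varphi)}\omega_\varphi^n=0$. Since $L_Y\omega_\varphi=L_Y\omega_{KS}+\sqrt{-1}\p\bar\p Y(\varphi)$, I obtain the decomposition $\theta_{Y,\varphi}=\theta_Y'+Y(\varphi)+a_Y(t)$, and the hypotheses $\varphi\in\overline{\mathcal K}(\epsilon_0)$ and $\int_M\|Y\|^2_{\omega_{KS}}\omega_{KS}^n=1$ yield $\|Y(\varphi)\|_{C^0}\le C\epsilon_0$ and $|a_Y(t)|\le C\epsilon_0$. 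Next, from the flow (2.4) the modified Ricci potential of $\omega_\varphi$ is $\tilde h=\theta_X+X(\varphi)-\dot\varphi$ up to an additive constant; expanding $0=F_{X_0}(Y)$ and integrating by parts (using holomorphy of $Y$ together with estimate (2.2) applied to $Y$ to handle the imaginary-part defect in the Cauchy-Riemann identity) I obtain
\begin{equation*}
\int_M\theta_{Y,\varphi}(\dot\varphi-c(t))\,e^{\theta_X+X(\varphi)}\omega_\varphi^n=R(t),
\end{equation*}
where $|R(t)|\le C\epsilon_0\int_M|\dot\varphi-c(t)|\,e^{\theta_X+X(\varphi)}\omega_\varphi^n+C\epsilon_0^2$.

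Substituting the decomposition from the first step into this identity and isolating $\theta_Y'$: the $Y(\varphi)$-term is immediately absorbed into the $C_1\epsilon_0$-type bound, while the $a_Y$-term requires estimating $\int_M(\dot\varphi-c(t))e^{\theta_X+X(\varphi)}\omega_\varphi^n$. Because $c(t)$ is defined so that $\int_M(\dot\varphi-c(t))e^{\tilde h}\omega_\varphi^n=0$ (a different weight), this integral is not zero; writing $e^{\theta_X+X(\varphi)}=e^{\tilde h}e^{\dot\varphi}$ and using $\|\dot\varphi\|_{C^0}=O(\epsilon_0)$ shows the discrepancy between the two integrals is $O(\epsilon_0)$, and multiplication by $|a_Y|=O(\epsilon_0)$ then produces an $O(\epsilon_0^2)$ contribution, yielding (2.11).

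The main technical obstacle is the integration-by-parts step. The flow (2.3) is driven by the real part $X$ of $X_0$, whereas the Tian-Zhu identity $F_{X_0}(Y)=0$ involves the full complex field $X_0=X+\sqrt{-1}X'$; the integration by parts thus picks up cross-terms containing $X'(\varphi)$ and the imaginary-part counterpart of $Y(\varphi)$. Estimate (2.2) (and its analogue for $Y$) is exactly the tool that absorbs these defects into a $C\epsilon_0$-multiple of the first term on the right-hand side of (2.11), but the computation must be carried out with care so as not to lose a power of $\epsilon_0$ and to ensure the constant $a_Y$ does not contribute worse than $O(\epsilon_0^2)$ despite $c(t)$ being defined relative to the ``wrong'' weight.
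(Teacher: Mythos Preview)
Your approach is correct and matches the paper's: vanish the Tian--Zhu modified Futaki invariant $F_{X_0}(Y)$ on $\omega_\varphi$, integrate by parts against the $\omega_\varphi$-potential $\theta_Y'+Y(\varphi)$, and invoke the kernel identity $\triangle_{\omega_{KS}}\theta_Y'+\langle\bar\partial\theta_X,\bar\partial\theta_Y'\rangle_{\omega_{KS}}=-\theta_Y'$ after replacing $\omega_\varphi$-quantities by $\omega_{KS}$-quantities with $O(\epsilon_0^2)$ errors. Two minor points where the paper's execution differs from your sketch: (i) the $\omega_\varphi$-potential of $Y$ is $\theta_Y'+Y(\varphi)$, which is \emph{complex} in general (your ``real-valued $\theta_{Y,\varphi}$'' need not exist unless $\varphi$ is invariant under the imaginary part of $Y$); the paper sidesteps this by keeping everything complex and handling the weight $e^{\theta_X+X_0(\varphi)}$ via $e^{\sqrt{-1}X'(\varphi)}=\cos(X'(\varphi))+\sqrt{-1}\sin(X'(\varphi))$, the $\sin$-part producing the $C_2\epsilon_0^2$ contribution; (ii) your treatment of the normalization constant $a_Y$ is more work than needed---since $|a_Y|\le C\epsilon_0$, the trivial bound $|a_Y\int_M(\dot\varphi-c(t))e^{\theta_X+X(\varphi)}\omega_\varphi^n|\le C\epsilon_0\int_M|\dot\varphi-c(t)|e^{\theta_X+X(\varphi)}\omega_\varphi^n$ already places that term into the first right-hand side of (2.11), so the ``wrong weight'' discussion is unnecessary.
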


\begin{proof}
Recall a generalized Futaki-invariant defined in [TZ2] by
$$F_{X_0}(Y)=\int_M
Y[h_{\omega_\phi}-(\theta_X+X_0(\phi))]e^{\theta_X+X_0(\phi))}\omega_\phi^n,~~\forall~
Y\in\eta(M).$$
 It was proved that the invariant is independent of the choice of
 K\"ahler metric $\omega_\phi$ on $M$ and the invariant vanishes if $M$ admits the K\"ahler-Ricci soliton $(\omega_{KS},
 X_0)$. So we have
 $$ F_{X_0}(Y)\equiv 0,~~\forall~ Y\in\eta(M).
  $$
By applying the metrics $\omega_\varphi$ to the above identity,
one sees
$$\int_M Y[\dot\varphi-c(t)-\sqrt{-1}X'(\varphi)]e^{\theta_X+X_0(\varphi))}\omega_\varphi^n=0,~~\forall~Y\in\eta_r(M).$$
 It follows
\begin{align} &|\text{Re}(\int_M
Y[\dot\varphi-c(t)-\sqrt{-1}X'(\varphi)]e^{\theta_X+X(\varphi)+
\cos(X'(\varphi))}\omega_\varphi^n)|\notag\\
&\le A_0\|\varphi\|_{C^2}|X'(\varphi)|.\end{align}
 On the other hand, by using  the Stoke's formula, we have
\begin{align} &\int_M
Y[\dot\varphi-c(t)-\sqrt{-1}X'(\varphi)]e^{\theta_X+X(\varphi)+
\ln\cos(X'(\varphi))}\omega_\varphi^n\notag\\
&=-\int_M(\dot\varphi-c(t))-\sqrt{-1}X'(\varphi))\notag\\
&\times
[\triangle(\theta_Y'+Y(\varphi))+<\overline\partial(\theta_Y'+Y(\varphi)),\overline\partial(\theta_X+X(\varphi)+
\ln \cos(X'(\varphi)))>]\notag\\
&\times e^{\theta_X+X(\varphi)+ \ln\cos(X'(\varphi))}\omega_\varphi^n\notag\\
&=\int_M(\dot\varphi-c(t)-\sqrt{-1}X'(\varphi))(\triangle\theta_Y'+<\overline\partial\theta_Y',\overline\partial\theta_X>)\notag\\
&\times e^{\theta_X+X(\varphi)+
\ln\cos(X'(\varphi))}\omega_\varphi^n+O(\epsilon_0^2)\notag\\
&=\int_M(\dot\varphi-c(t)-\sqrt{-1}X'(\varphi))(\triangle_{\omega_{KS}}\theta_Y'+<\overline\partial\theta_Y',
\overline\partial\theta_X>_{\omega_{KS}})\notag\\
&\times e^{\theta_X+X(\varphi)+ \ln\cos(X'(\varphi))
}\omega_\varphi^n+O(\epsilon_0^2).
\end{align}
Note that
$$<\overline\partial\theta_Y',\overline\partial\theta_X>_{\omega_{KS}}=Y(\theta_X)=X(\theta_Y')=<\overline\partial\theta_X,
\overline\partial\theta_Y'>_{\omega_{KS}}$$
 is a real-valued function ([TZ1]). Thus
$$\triangle_{\omega_{KS}}\theta_Y'+<\overline\partial\theta_Y',
\overline\partial\theta_X>_{\omega_{KS}}=\triangle_{\omega_{KS}}\theta_Y'+<\overline\partial\theta_X,
\overline\partial\theta_Y'>_{\omega_{KS}}=-\theta_Y'.$$
 Therefore, inserting  (2.13) into (2.12),  one will get (2.11).
\end{proof}

 By using Lemma 2.2 and Lemma 2.3,  we can complete  the   $L^2$-estimate of $\dot\varphi$.

\begin{lem} Let $\epsilon_0<<1$. Then
 \begin{align}
 \tilde H_0(t)\le \tilde H_0(0) e^{-\theta t}+ \frac{B_0}{\theta}\epsilon_0^3, ~~\forall ~~t\in [0,T),
 \end{align}
 if $\varphi_t$ lies in
 $\overline{\mathcal{K}}(\epsilon_0)$  and  $\tilde H_0(t)\ge\frac{B_0}{\theta}\epsilon_0^3$ for any $t$ in $[0,T)$,
where the constant $B_0=B_0(\|X'\|_{C^0})$ depends only on
$\|X'\|_{C^0}$ and the constant $\theta>0$  depends only on
  the gap of the first two eigenvalues of the operator $P$ associated to the metric $\omega_{KS}$ in
  Lemma 2.2.
\end{lem}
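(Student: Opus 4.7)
The plan is to convert the differential inequality (2.7) into an ODE of the form
$$\frac{d}{dt}\tilde H_0(t) \le -\theta\,\tilde H_0(t) + B_0\,\epsilon_0^3,$$
from which the stated bound follows immediately by Gronwall's inequality (multiplying by $e^{\theta t}$ and integrating).

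The central step is a spectral-gap lower bound for the weighted Dirichlet energy in (2.7). I would work in the weighted $L^2$ space with inner product $\langle u,v\rangle_{\tilde h} = \frac{1}{V}\int_M uv\,e^{\tilde h}\omega_\varphi^n$, in which the operator $P=(P,\omega_\varphi)$ from Lemma 2.2 is essentially symmetric. By Lemma 2.2 its kernel is spanned by the potentials $\theta_Y'$ of elements $Y\in\eta_r(M)$. Orthogonally decompose
$$\dot\varphi_t - c(t) = \phi + \phi^\bot,$$
with $\phi$ in the span of a $\langle\cdot,\cdot\rangle_{\tilde h}$-orthonormal basis of $\ker(P,\omega_\varphi)$ and $\phi^\bot$ orthogonal to this kernel. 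Applying Lemma 2.3 to each basis element $\theta_Y'$ and using Cauchy--Schwarz to pass from the $L^1$-estimate to an $L^2$-estimate yields
$$\|\phi\|_{\tilde h}^2 \le C\epsilon_0^2\,\tilde H_0(t) + C\epsilon_0^4,\qquad \|\phi^\bot\|_{\tilde h}^2 \ge (1-C\epsilon_0^2)\,\tilde H_0(t) - C\epsilon_0^4.$$
By continuity of eigenvalues under the $C^{2,\alpha}$-small perturbation $\omega_\varphi$ of $\omega_{KS}$ (in the spirit of Lemma 1.2), the second eigenvalue of $-P$ in the weighted space exceeds $1$ by a uniform gap $\theta_0>0$, so that
$$\frac{1}{V}\int_M \|\nabla(\dot\varphi-c)\|^2 e^{\tilde h}\omega_\varphi^n \ge (1+\theta_0)\|\phi^\bot\|_{\tilde h}^2 \ge (1+\theta_0)\big[(1-C\epsilon_0^2)\tilde H_0(t) - C\epsilon_0^4\big].$$

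Inserting this into (2.7) and bounding the factor $(1+\dot\varphi)$ by $(1+C\epsilon_0)$ produces a coefficient in front of $\tilde H_0(t)$ which, for $\epsilon_0$ sufficiently small, is strictly less than some $-\theta<0$ with $\theta$ depending only on $\theta_0$. The remaining error $2|\dot\varphi-c(t)|\,|X'(\dot\varphi)|\,\|\nabla\dot\varphi\|$ is handled by noting that, since $X'$ generates the compact subgroup $K_{X_0}$ and $\varphi\in\overline{\mathcal K}(\epsilon_0)$ is $C^{2,\alpha}$-close to its average along $K_{X_0}$-orbits, one has $|X'(\varphi)|=O(\epsilon_0)$, and consequently, via equation (2.4) and the definition of $\tilde h$, also $|X'(\dot\varphi)|=O(\epsilon_0)$; the norms $\|\nabla\dot\varphi\|$ and $|\dot\varphi-c|$ are uniformly controlled by Perelman's estimate and the $W^{k,2}$-bound of Appendix 1. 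An AM--GM step then absorbs part of this error into the dissipation term and leaves the rest inside $B_0\epsilon_0^3$, with $B_0=B_0(\|X'\|_{C^0})$.

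The hypothesis $\tilde H_0(t)\ge\frac{B_0}{\theta}\epsilon_0^3$ throughout $[0,T)$ is what allows the error terms to remain genuinely subordinate to the main dissipative term, so that the ODE $\frac{d}{dt}\tilde H_0 \le -\theta\tilde H_0 + B_0\epsilon_0^3$ holds uniformly on $[0,T)$; Gronwall then delivers exactly the advertised bound. The main obstacle I anticipate is careful bookkeeping of the various $\epsilon_0$-powers, particularly the $X'(\dot\varphi)$ contribution which has no analogue in the Kähler--Einstein case of Section 1; this is precisely where the compactness of $K_{X_0}$ and the structure of the reductive part $\eta_r(M)$, encoded in Lemmas 2.2 and 2.3 via the generalized Futaki invariant, intervene to keep the error within the $\epsilon_0^3$ scale.
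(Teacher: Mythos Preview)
Your proposal is correct and follows essentially the same route as the paper: the weighted-$L^2$ decomposition of $\dot\varphi-c(t)$ relative to $\ker(P,\omega_\varphi)$, the use of Lemma~2.3 (via the generalized Futaki invariant) to bound the kernel component, the spectral-gap estimate on the complement, and insertion into (2.7) to obtain the differential inequality $\frac{d}{dt}\tilde H_0\le -\theta\tilde H_0+B_0\epsilon_0^3$ are exactly the paper's steps (2.15)--(2.18). One minor clarification: the hypothesis $\tilde H_0(t)\ge\frac{B_0}{\theta}\epsilon_0^3$ is not what makes the ODE (2.18) hold---that follows solely from $\varphi_t\in\overline{\mathcal K}(\epsilon_0)$---but is invoked by the paper only at the final integration step, where in fact your direct Gronwall argument already gives the conclusion without it.
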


\begin{proof}
Let $\psi^i$ be the first eigenvalue functions of the
 operator $(P,\omega_{\varphi})$ with respect to the  metric $\omega_\varphi$
 with satisfying $\int_M |\psi^i|^2 e^{\tilde h} \omega_{\varphi}^n=1.$
Then by the continuity of eigenvalue functions and (2.11), one
sees that there exists two constants $C$ and $A_0$ such that
 \begin{align} &|\int_M \psi^i
(\dot\varphi_t-c(t))e^{\tilde h}\omega_\varphi^n\notag\\
 &\le
C\epsilon_0\int_M|\dot\varphi_t-c(t)|e^{\tilde
h}\omega_\varphi^n+A_0\epsilon_0^2.
\end{align}
Now  as same as in  the proof of Lemma 1.3,  we decompose
$\dot\varphi_t-c(t)$ as $\psi+\psi^\bot$ with  $\psi\in
\Lambda_1(M,\omega_{\varphi})$  and
$\psi^\bot\in\Lambda_1^\bot(M,\omega_{\varphi})$, where
$\Lambda_1(M,\omega_{\varphi})$ is a linear space spanned by a
basis  $\{\psi^i\}$ and $\Lambda_1^\bot(M,\omega_{\varphi})$ be a
subspace of $L^2$-weighted integral functions   which are
orthogonal to $\Lambda_1(M,\omega_{\varphi})\cap\Bbb R$ in the
sense of
 $$ \int_M \psi\psi'e^{\tilde h} \omega_{\varphi}^n=0,~~\forall~\psi\in\Lambda_1(M,\omega_{\varphi}),
  \psi'\in\Lambda_1^\bot(M,\omega_{\varphi}).$$
   Then  we get
$$
\int_M|\psi|^2e^{\tilde h}\omega^n_\varphi\leq C'\epsilon_0^2
\int_M(\dot\varphi-c(t))^2e^{\tilde h}\omega^n_\varphi +
nA_0^2\epsilon_0^4,
$$
 and so
\begin{align} &\int_M|\psi^\bot|^2e^{\tilde h}\omega^n_\varphi\notag\\
&\ge (1-C'\epsilon_0^2) \int_M(\dot\varphi-c(t))^2e^{\tilde
h}\omega^n_\varphi- nA_0^2\epsilon_0^4.
 \end{align}
 On the other hand, by using the continuity of eigenvalues and Lemma 2.2,
  there exists a number $\delta_0>0$ (compared to Lemma 1.2),
  which  depends only on
  the gap of the first two eigenvalues of the operator $(P,\omega_{KS})$ with respect to the metric $\omega_{KS}$ in
  Lemma 2.2, such that for any
  $\varphi\in\overline{\mathcal{K}}(\epsilon_0)$,  we have
$$\int_M\|\nabla\psi^\bot\|^2 e^{\tilde h}\omega^n_\varphi\notag\\
\geq (1+\delta_0) \int_M(\psi^\bot)^2 e^{\tilde
h}\omega^n_\varphi.$$
 Thus by (2.16), we get
\begin{align}
&\int_M \|\nabla(\dot\varphi_t-c(t))\|^2e^{\tilde h}
\omega^n_\varphi &\notag\\
&\geq (1+\delta_0)(1-C'\epsilon_0^2)
\int_M(\dot\varphi_t-c(t))^2e^{\tilde h}\omega^n_\varphi -
nA_0^2\epsilon_0^4.
\end{align}
By inserting (2.17) into (2.7), we obtain
\begin{align}
&\frac{ d \tilde H_0(t)}{dt}\notag\\
&\leq -
2[(1-2\epsilon_0)(1+\delta_0)(1-C'\epsilon_0^2)-(1+\epsilon_0)]
\tilde H_0(t)+ B_0\epsilon_0^3\notag\\
&\leq -\theta \tilde H_0(t)+ B_0\epsilon_0^3,
\end{align}
 where  the constant $B_0=B_0(\|X'\|_{C^0})$ depends only on
$\|X'\|_{C^0}$ and $\theta=
2[(1-2\epsilon_0)(1+\delta_0)(1-C'\epsilon_0^2)-(1+\epsilon_0)]\ge\delta_0$
as  $\epsilon_0$ is  small enough.

By (2.18), we have
$$
\frac{ d (\tilde H_0(t)-\frac{B_0\epsilon_0^3}{\theta})}{dt}\le
-\theta(\tilde H_0(t)-\frac{B_0\epsilon_0^3}{\theta}).$$
Since  $\tilde H_0(t)\ge\frac{B_0\epsilon_0^3}{\theta}$, we get
 \beqs
\tilde H_0(t)\le & e^{-\theta t}( \tilde
H_0(0)-\frac{B_0\epsilon_0^3}{\theta})+
\frac{B_0\epsilon_0^3}{\theta}\\
&\le e^{-\theta t} \tilde H_0(0)+ \frac{B_0\epsilon_0^3}{\theta}.
\eeqs

\end{proof}

\begin{rem} From (2.7) and (2.12),  we see that if in
addition that the initial K\"ahler potential  $\psi$ is
$K_{X_0}$-invariant, then (2.11) can be  improved  as
$$\tilde H_0(t)\le \tilde H_0(0) e^{-\theta t}, ~~\forall ~~t\in [0,T)$$
  whenever  $\varphi_t$ lies in
 $\overline{\mathcal{K}}(\epsilon_0)$.
\end{rem}

To get a  $C^0$-estimate and $C^\alpha$-estimate for
$\dot\varphi$, we use a method as in Appendix  to estimate
$W^{k,2}$-estimates ($k\ge 1$) for $\dot\varphi$. Let
 $$\tilde H_k(t)=\int_M\|\nabla^k\dot\varphi\|^2e^{\tilde h}\omega_{\varphi}^n.$$
Then  we have

\begin{prop} Let $\epsilon_0<<1$. Then under the same condition in Lemma 2.4,    there exist
 two uniform constants $\theta', B>0$ which depend only on the metric $\omega_{KS}$ and integer number $k$   such that
 \begin{align} \tilde H_k(t)\le
 e^{-\theta' t} (\tilde H_k(0)+B\tilde H_0(0))+\frac{B_0B}{\theta'}\epsilon_0^3, ~~\forall ~t\in [0,T),\end{align}
if $\tilde H_k(t)+B\tilde H_0(t))\ge
\frac{B_0B}{\theta'}\epsilon_0^3$ for any $t\le T$,
 where $B_0$ is the constant determined in Lemma 2.4.
 \end{prop}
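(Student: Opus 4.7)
The plan is to mimic the strategy of Lemma 2.4, computing $\frac{d}{dt}\tilde H_k(t)$ and extracting a dominant negative term via integration by parts, but at the cost of lower-order contributions from commutators and from the drift $X(\dot\varphi)$ that cannot be absorbed without borrowing from $\tilde H_0$. First I would differentiate under the integral using $\ddot\varphi=\triangle_\varphi\dot\varphi+X(\dot\varphi)+\dot\varphi$, $\frac{d}{dt}\tilde h=-\triangle_\varphi\dot\varphi-\dot\varphi$, and the evolution $\frac{d}{dt}\omega_\varphi^n=(\triangle_\varphi\dot\varphi)\omega_\varphi^n$. Integrating by parts against $e^{\tilde h}\omega_\varphi^n$ and commuting $\nabla^k$ with $\triangle_\varphi$ and $X$, the leading negative term will be $-2\int_M\|\nabla^{k+1}\dot\varphi\|^2 e^{\tilde h}\omega_\varphi^n$, while the remaining terms are quadratic in $\nabla^{\le k}\dot\varphi$ with coefficients bounded by $\|\varphi\|_{C^{k+2,\alpha}}\le C\epsilon_0$ (by Schauder regularity for equation (2.4)) and by $\|X\|_{C^{k+1}}$.

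Next I would invoke the weighted Gårding/Poincar\'e inequality
\[
\int_M\|\nabla^{k+1}u\|^2 e^{\tilde h}\omega_\varphi^n \;\ge\; (1+\delta_0)\int_M\|\nabla^{k}u\|^2 e^{\tilde h}\omega_\varphi^n \;-\; C_k\int_M |u|^2 e^{\tilde h}\omega_\varphi^n,
\]
valid for $u=\dot\varphi-c(t)$ when $\varphi\in\overline{\mathcal K}(\epsilon_0)$, which follows by iterating the spectral gap for the operator $(P,\omega_\varphi)$ of Lemma 2.2 together with the near-kernel analysis used in Lemma 2.4 (the kernel being exhausted by $\Lambda_1(M,\omega_\varphi)$ and almost-orthogonal to $u$ by the Futaki-invariant computation of Lemma 2.3). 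The error term $-C_k\tilde H_0(t)$ is the obstruction to closing the estimate on $\tilde H_k$ alone. The cross-term involving $X'(\dot\varphi)$ in the analogue of (2.7) is handled by Cauchy--Schwarz and interpolation, trading $\epsilon_0|\dot\varphi-c(t)|\|\nabla^{k+1}\dot\varphi\|$ for $\tfrac12$ of the good gradient term plus an $O(\epsilon_0^3)$ contribution absorbed into the $B_0\epsilon_0^3$ remainder from Lemma 2.4.

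Putting these together yields a differential inequality of the form
\[
\frac{d}{dt}\tilde H_k(t) \;\le\; -2\delta_0\,\tilde H_k(t) + C_k\,\tilde H_0(t) + \tfrac12 B_0\epsilon_0^3.
\]
Coupling with Lemma 2.4 via $F(t):=\tilde H_k(t)+B\tilde H_0(t)$ and choosing $B\ge C_k/\delta_0$, one gets $\frac{d}{dt}F\le -\theta' F+BB_0\epsilon_0^3$ for some $\theta'>0$ depending only on $\delta_0$ and on the gap of $(P,\omega_{KS})$. Gr\"onwall's inequality on $F-\frac{BB_0}{\theta'}\epsilon_0^3$, valid on the interval where $F\ge\frac{BB_0}{\theta'}\epsilon_0^3$, then gives (2.19).

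The main obstacle I anticipate is controlling the commutators $[\nabla^k,\triangle_\varphi]$ and $[\nabla^k,X]$, where curvature and derivatives of $X$ of order up to $k+1$ appear multiplied by $\nabla^{\le k+1}\dot\varphi$: one must check that all such terms either carry a prefactor $O(\epsilon_0)$ (so they may be absorbed into the good gradient term) or are of strictly lower order (so they may be absorbed into $B\tilde H_0$ via interpolation). Once those bookkeeping inequalities are established for $\varphi\in\overline{\mathcal K}(\epsilon_0)$, the Gr\"onwall step is routine.
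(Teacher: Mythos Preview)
Your proposal follows the same architecture as the paper: derive a differential inequality $\frac{d}{dt}\tilde H_k \le -\theta'\tilde H_k + C\tilde H_0$, couple it with (2.18) from Lemma 2.4 via the combination $\tilde H_k + B\tilde H_0$, and integrate by Gr\"onwall. The paper's execution is, however, simpler than yours in two places. First, the passage from $-2\tilde H_{k+1} + C_1'\tilde H_k$ to $-\theta'\tilde H_k + C_3\tilde H_0$ uses only a crude interpolation $\tilde H_k \le \epsilon\tilde H_{k+1} + C_\epsilon\tilde H_0$ (uniform for $\varphi\in\overline{\mathcal K}(\epsilon_0)$), not the sharp spectral gap $(1+\delta_0)$ or the Futaki near-kernel analysis of Lemma 2.3; that delicate work is confined to the $k=0$ level and is already packaged into Lemma 2.4. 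Second, no $O(\epsilon_0^3)$ forcing term appears directly in the $\tilde H_k$ inequality: the paper works from the pointwise evolution of $\|\nabla^k\dot\varphi\|^2$ (the analogue of (4.1) in Appendix 1), where the drift $X(\dot\varphi)$ in $\ddot\varphi$ contributes only terms of size $\|\nabla^{k+1}\dot\varphi\|\,\|\nabla^k\dot\varphi\|$ that are absorbed into $-2\tilde H_{k+1}$ by Cauchy--Schwarz, and the $B_0\epsilon_0^3$ enters solely through the coupled inequality (2.18) for $\tilde H_0$. Your route works, but the commutator bookkeeping and the $X'$ cross-term are lighter than you anticipate.
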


\begin{proof} First note that similarly to (4.1) in Appendix 1, we
can obtain
\begin{align}
&\frac{d\|\nabla^k \dot\varphi\|^2}{dt}\notag\\
& \le
-2\|\nabla^{k+1}\dot\varphi\|^2+C_1\|\nabla^k\dot\varphi\|^2+C_2\|\dot\varphi-c(t)\|^2.\notag
\end{align}
It follows
\begin{align}
\frac{ d \tilde H_k(t)}{dt}&=\int_M
\frac{d\|\nabla^k\dot\varphi\|^2}{dt}e^{\tilde
h}\omega_{\phi}^n+\int_M
\|\nabla^k\dot\varphi\|^2(\triangle\dot\varphi+X(\dot\varphi)-\ddot\varphi) e^{\tilde h} \omega_{\varphi}^n\notag \\
&\le -2\tilde H_{k+1}(t)+C_1'\tilde H_k(t)\notag+C_2'\|\dot\varphi-c(t)\|^2\notag\\
&\le -\theta' \tilde H_{k}(t)+C_3\tilde H_0(t).\notag
\end{align}
On the other hand, by (2.18), we  have
$$
\frac{d \tilde H_0(t)}{dt}\le  -\theta \tilde H_0(t)+
B_0\epsilon_0^3, ~~\forall ~t\in [0,T),$$
 since we may also assume that $\tilde H_0(t)\ge\frac{B_0}{\theta}\epsilon_0^3$ for any $t$ in $[0,T)$,
Thus  combining the above  two inequalities, we get
 \begin{align} \frac{
d( \tilde H_k(t)+B\tilde H_0(t))}{dt}&\le -\theta'
[\tilde H_{k}(t)+\frac{(B\theta-C_3)}{\theta'}\tilde H_0(t)]+B_0B\epsilon_0^3\notag\\
&\le -\theta' (\tilde H_k(t)+B\tilde H_0(t))+B_0B\epsilon_0^3,
 \end{align}
  where $B$ is a sufficiently  large number independent of $\epsilon_0$.
From (2.20), one can easily get
 $$ (\tilde H_k(t) +B\tilde H_0(t)) \le
 e^{-\theta' t} (\tilde H_k(0)+B\tilde H_0(0))+\frac{B_0B}{\theta'}\epsilon_0^3, ~~\forall ~t\in [0,T),$$
  and so (2.19) is true.

\end{proof}

By the embedding theory of Sobolev spaces, we get

\begin{cor}  Let $\epsilon_0<<1$. Then under the same condition in Lemma 2.4, there exist
 two uniform constants $\theta_0, C_0>0$ which depend only on the metric
 $\omega_{KS}$ such that
\begin{align}\|\tilde{\dot{\varphi_t}}\|_{C^{\alpha}}\le  C_0[e^{-\theta_0
t}\|\psi-\underline\psi\|_{C^{2,\alpha}}+
\epsilon_0^{\frac{3}{2}}],
 ~~\forall ~t\in [0,T),\end{align}
if $\|\tilde{\dot{\varphi_t}}\|_{C^{\alpha}}\ge
C_0\epsilon_0^{\frac{3}{2}}$ for any $t\le T$.
\end{cor}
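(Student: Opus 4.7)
The plan is to pass from the weighted Sobolev bound of Proposition 2.6 to a $C^\alpha$ bound via the Sobolev embedding, with $k$ chosen large enough depending only on $n$ and $\alpha$.

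First, I fix an integer $k$ with $k>n+\alpha$, so that $W^{k,2}(M)\hookrightarrow C^{\alpha}(M)$ continuously. Since $\varphi_t\in\overline{\mathcal{K}}(\epsilon_0)$, the metric $\omega_{\varphi_t}$ and the weight $e^{\tilde h_t}$ are uniformly equivalent to $\omega_{KS}$ and $e^{\theta_X}$, so the Sobolev embedding constant and the equivalence $\|u\|_{W^{k,2}(\omega_{KS})}^2\sim\sum_{j\le k}\int_M\|\nabla^j u\|^2 e^{\tilde h_t}\omega_{\varphi_t}^n$ can all be taken uniform in $t$. Applied to $u=\dot\varphi_t-c(t)=\tilde{\dot\varphi_t}$ this yields
\begin{equation*}
\|\tilde{\dot\varphi_t}\|_{C^{\alpha}}^{2}\le C\bigl(\tilde H_k(t)+B\,\tilde H_0(t)\bigr),
\end{equation*}
with $C$ independent of $t$ and of $\epsilon_0$.

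Second, I invoke Proposition 2.6 (under the standing hypothesis that the relevant thresholds are exceeded, which is guaranteed by the corollary's assumption $\|\tilde{\dot\varphi_t}\|_{C^\alpha}\ge C_0\epsilon_0^{3/2}$ upon taking $C_0$ large enough so that the $\epsilon_0^{3/2}$ condition on the $C^\alpha$ norm forces the $\epsilon_0^3$ lower bound on $\tilde H_k(t)+B\tilde H_0(t)$). Combined with the previous inequality, taking square roots and using $\sqrt{a+b}\le\sqrt a+\sqrt b$ gives
\begin{equation*}
\|\tilde{\dot\varphi_t}\|_{C^{\alpha}}\le C'\Bigl(e^{-\theta't/2}\bigl(\tilde H_k(0)+B\tilde H_0(0)\bigr)^{1/2}+\epsilon_0^{3/2}\Bigr).
\end{equation*}
Setting $\theta_0=\theta'/2$ produces the exponential factor and the $\epsilon_0^{3/2}$ term claimed in (2.21).

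Third, I control the initial quantities by $\|\psi-\underline\psi\|_{C^{2,\alpha}}$. For $\tilde H_0(0)$ this is direct, since $\dot\varphi(0)=\log(\omega_\psi^n/\omega_{KS}^n)+\psi+X(\psi)-c(0)$ involves at most second derivatives of $\psi$, so $\tilde H_0(0)\le C\|\psi-\underline\psi\|_{C^{2,\alpha}}^{2}$. The higher-order piece $\tilde H_k(0)$ cannot be bounded by $\|\psi-\underline\psi\|_{C^{2,\alpha}}$ directly, so I use the smoothing remark already invoked in the paper: replace the initial time $0$ by $t=1$ and use short-time parabolic regularity for (2.4) to get $\|\varphi_1-\underline{\varphi_1}\|_{C^{k+2,\alpha}}\le C(k)\|\psi-\underline\psi\|_{C^{2,\alpha}}$ (possibly after shrinking $\epsilon$ so that $\varphi_1$ still lies in $\overline{\mathcal{K}}(\epsilon_0)$); this forces $\tilde H_k(1)\le C(k)\|\psi-\underline\psi\|_{C^{2,\alpha}}^2$, and I run the previous step from $t=1$ with the loss of an innocuous constant absorbed into $C_0$.

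The main obstacle is the third step: the exponential-decay form of (2.21) has the initial $C^{2,\alpha}$-norm as coefficient, while the natural bound from Proposition 2.6 needs a $W^{k,2}$-norm of $\dot\varphi(0)$ that involves $k+2$ derivatives of $\psi$. Resolving this requires the short-time parabolic smoothing for (2.4) together with the observation that $\varphi_1$ inherits a quantitative $C^{k+2,\alpha}$-bound in terms of $\|\psi-\underline\psi\|_{C^{2,\alpha}}$; everything else is a routine combination of the Sobolev embedding, uniform metric equivalence on $\overline{\mathcal{K}}(\epsilon_0)$, and Proposition 2.6.
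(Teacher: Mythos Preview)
Your proposal is correct and follows the paper's approach: the paper's entire proof is the single line ``By the embedding theory of Sobolev spaces, we get'', i.e.\ apply $W^{k,2}\hookrightarrow C^{\alpha}$ to Proposition~2.6 and take square roots. You have simply spelled out the details the paper omits --- the uniform equivalence of metrics and weights on $\overline{\mathcal K}(\epsilon_0)$, the interpolation to pass from $\tilde H_k+B\tilde H_0$ to the full $W^{k,2}$ norm, the choice of $C_0$ large enough so that the hypothesis $\|\tilde{\dot\varphi_t}\|_{C^\alpha}\ge C_0\epsilon_0^{3/2}$ forces the threshold in Proposition~2.6, and the short-time smoothing trick to reduce $\tilde H_k(0)$ to a bound in terms of $\|\psi-\underline\psi\|_{C^{2,\alpha}}$, which is exactly the device the paper uses later (``we may assume that $\psi$ is smooth since we can replace it by an evolved K\"ahler metric $\varphi_{t=1}$'').
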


\begin{rem} By Remark 2.5, according to the proof of Proposition 2.6, we see that if in
addition that the initial K\"ahler potential  $\psi$ is
$K_{X_0}$-invariant, then (2.19) can be  improved  as
$$\tilde H_k(t)\le (\tilde H_0(0)+B\tilde H_k(0)) e^{-\theta' t}, ~~\forall ~~t\in [0,T)$$
  whenever  $\varphi_t$ lies in
 $\overline{\mathcal{K}}(\epsilon_0)$. Thus (2.21) can be improved  as
$$\|\tilde{\dot{\varphi_t}}\|_{C^{\alpha}}\le  C_0e^{-\theta_0
t}\|\psi-\underline{\psi}\|_{C^{2,\alpha}}.$$
\end{rem}

The following lemma can be easily proved by using apriori
estimates for solution $\varphi(t,\cdot)$ of (2.4) at finite time
(cf [TZ3]).

\begin{lem} Let $\psi\in\mathcal{K}(\frac{\epsilon_0}{N})$. Then there exists
$T=T_N$ such that evolved K\"ahler potentials  $\varphi_t$ of
(2.4) with $\psi$ as an initial potential  lies in
$\mathcal{K}(\epsilon_0)$ for any $t<T$.
\end{lem}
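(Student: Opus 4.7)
The plan is to exploit the fact that (2.4) is a quasilinear parabolic equation for which $\varphi \equiv 0$ is a stationary solution (since $(\omega_{KS},X_0)$ satisfies the K\"ahler-Ricci soliton equation), together with standard short-time parabolic apriori estimates. Because the flow depends continuously on initial data in the $C^{2,\alpha}$-topology, initial data $\psi-\underline{\psi}$ of norm at most $\epsilon_0/N$ should produce evolved potentials of norm at most $\epsilon_0$, at least up to a time $T_N$ that tends to infinity as $N\to\infty$.

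First I would invoke the short-time existence and smoothness theory for the modified flow (2.4), which is established in [TZ3] by the same argument as in [Ca] for the standard flow: for any smooth $\psi\in\mathcal{M}(\omega_{KS})$ there is a unique smooth solution $\varphi_t$ on $[0,\infty)$, and on any finite interval $[0,T]$ the $C^{k,\alpha}$ norms of $\varphi_t$ are controlled in terms of $T$ and the corresponding norms of $\psi$.

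Next, I would linearize (2.4) about $\varphi\equiv 0$. Writing $u_t=\varphi_t-\underline{\varphi_t}$ and expanding, one checks that
$$\frac{\partial u_t}{\partial t}=\Delta_{\omega_{KS}}u_t+u_t+X(u_t)+Q_t,$$
where $Q_t$ is a quadratic remainder in $\nabla^2 u_t$ that vanishes when $u_t\equiv 0$. Parabolic Schauder theory applied to the linear part yields an estimate of the form $\|u_t\|_{C^{2,\alpha}}\le C(T)\|u_0\|_{C^{2,\alpha}}$ on $[0,T]$, provided $u_t$ stays small enough that $Q_t$ can be absorbed into the linear part, e.g. as long as $\varphi_t\in\overline{\mathcal{K}}(\epsilon_0)$.

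Finally, given $N$, choose $T_N$ so that $C(T_N)\le N$, and run a standard continuation argument: set $T^{*}=\sup\{t:\varphi_s\in\mathcal{K}(\epsilon_0)\ \forall s<t\}$; on $[0,T^{*})$ the Schauder estimate gives $\|u_t\|_{C^{2,\alpha}}\le C(t)\cdot\epsilon_0/N\le\epsilon_0$, so by continuity $T^{*}\ge T_N$. The main technical subtlety is ensuring that the constant $C(T)$ depends only on $\omega_{KS}$ and $T$, not on $\varphi_t$ itself; this is where one uses $\varphi_t\in\overline{\mathcal{K}}(\epsilon_0)$ to control the evolved metric $\omega_{\varphi_t}$ uniformly by $\omega_{KS}$ and thereby absorb the nonlinear remainder $Q_t$ into the linearization.
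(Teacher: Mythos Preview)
Your proposal is correct and follows essentially the same approach as the paper, which does not give a detailed proof but simply states that the lemma ``can be easily proved by using apriori estimates for solution $\varphi(t,\cdot)$ of (2.4) at finite time (cf [TZ3]).'' Your linearization plus parabolic Schauder estimate plus continuation argument is precisely one way to make this rigorous; the only minor adjustment worth noting is that in the continuation step you should choose $T_N$ so that $C(T_N)\le N/2$ (rather than $\le N$) to get a strict inequality $\|u_t\|_{C^{2,\alpha}}<\epsilon_0$ on $[0,T^*)$, which is what is needed to derive a contradiction from $T^*<T_N$.
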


We are now going to do a key estimate for the  proof of Theorem
2.1.

\begin{prop} There exist a small $\epsilon_0$ and a large number $N$ such that if the initial data
$\psi\in\mathcal{M}(\omega_{KS})$ in  (2.4) satisfies
$\|\psi-\underline{\psi}\|_{C^{2,\alpha}}\leq\frac{\epsilon_0}{N}$,
then there there exist a family of $\sigma_t\in\text{Aut}_r(M)$
such that
\begin{align}\|\varphi_{\sigma_t}-\underline{\varphi_{\sigma_t}}\|_{C^{2,\alpha}}\le
\epsilon_0,~~\forall ~t>0, \end{align}
  where  $\varphi_{\sigma_t}=(\sigma_t)^*\varphi_t+\rho_t$ and $\rho_t$ are
K\"ahler potentials defined by (2.5) in Theorem 2.1.

\end{prop}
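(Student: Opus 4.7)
The plan is to adapt the continuity/bootstrap scheme used for Theorem 1.1 to the soliton setting, while handling three new features: (i) the available gauge is the smaller group $\text{Aut}_r(M)$ rather than $\text{Aut}_0(M)$; (ii) the energy estimates of Lemma 2.4 and Corollary 2.7 produce an $O(\epsilon_0^{3/2})$ remainder rather than pure exponential decay, coming from the $|X'(\varphi)|\,\|\nabla\dot\varphi\|$ term in (2.7); and (iii) the relevant linear operator is $(P,\omega_{KS})$ of Lemma 2.2, whose kernel is isomorphic to $\eta_r(M)$. I would run the bootstrap on the gauge-fixed object: define
\[
\tilde T := \sup\bigl\{T>0 : \exists\,\sigma_t\in\text{Aut}_r(M),\ \varphi_{\sigma_t}-\underline{\varphi_{\sigma_t}}\in\overline{\mathcal K}(\epsilon_0)\ \text{for all } t<T\bigr\}.
\]
Lemma 2.9 guarantees $\tilde T\ge T_N>0$, and the goal is to show $\tilde T=\infty$ by contradiction, by deriving a strict improvement $\|\varphi_{\sigma_t}-\underline{\varphi_{\sigma_t}}\|_{C^{2,\alpha}}\le\epsilon_0/2$ on $[0,\tilde T)$.

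On $[0,\tilde T)$, I would combine Lemma 2.4 and Corollary 2.7, applied equivariantly to $\sigma_t^{\ast}\omega_{\varphi_t}$ (using that $\text{Aut}_r(M)$ was chosen to contain $K_{X_0}$ and to preserve the weighted volume $e^{\theta_X+X(\varphi)}\omega_\varphi^n$), to obtain
\[
\|\tilde{\dot{\varphi_t}}\|_{C^\alpha}\le C_0 e^{-\theta_0 t}(\epsilon_0/N)+C_0\epsilon_0^{3/2},
\]
and then integrate in time as in Proposition 1.7 to bound $\|\tilde{\varphi_{\sigma_t}}\|_{C^0}\le 3\epsilon_0/(4N)+O(\epsilon_0^{3/2})$. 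At each $t$, a Bando--Mabuchi-type projection (Appendix 5) selects a $\sigma_t\in\text{Aut}_r(M)$, depending continuously on $t$, such that $\varphi_{\sigma_t}-\underline{\varphi_{\sigma_t}}$ is almost orthogonal to $\ker(P,\omega_{KS})$ in the weighted $L^2$ inner product. Substituting this into the pulled-back Monge--Amp\`ere equation
\[
\omega_{\varphi_{\sigma_t}}^n=\omega_{KS}^n\,e^{\tilde h(\sigma_t(\cdot))-\varphi_{\sigma_t}-X_0(\varphi_{\sigma_t})+a_t},
\]
the Implicit Function Theorem applies because the orthogonality kills the kernel of the linearization at the origin, and it delivers
\[
\|\varphi_{\sigma_t}-\underline{\varphi_{\sigma_t}}\|_{C^{2,\alpha}}\le 2\|\tilde h(\sigma_t(\cdot))\|_{C^\alpha}+O(\epsilon_0/N)\le\epsilon_0/2,
\]
provided $N$ is large and $\epsilon_0$ is small.

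The principal obstacle is the $\epsilon_0^{3/2}$ remainder in Corollary 2.7: the generalized Futaki invariant vanishes only on the reductive part $\eta_r(M)$, and without $K_{X_0}$-invariance of $\psi$ the cubic error in (2.7) cannot be eliminated. Closing the bootstrap therefore requires a careful calibration of two small parameters --- take $N$ large and then $\epsilon_0\ll N^{-2}$, so that the cubic remainder stays well below the linear threshold $\epsilon_0/N$, while $\epsilon_0/N$ itself is still small enough to put the Implicit Function Theorem in its contraction regime. A secondary technical point is maintaining $\text{dist}(\sigma_t,\text{Id})\le 1$ uniformly along the flow, which follows from the uniqueness of the Bando--Mabuchi projection together with the standing hypothesis $\varphi_{\sigma_t}-\underline{\varphi_{\sigma_t}}\in\overline{\mathcal K}(\epsilon_0)$; this in turn is what allows the equivariance argument used in the energy estimates to be applied uniformly in $t$.
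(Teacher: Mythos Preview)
Your continuous bootstrap has a genuine gap at the time-integration step. From Corollary 2.7 you get $\|\tilde{\dot\varphi_t}\|_{C^\alpha}\le C_0 e^{-\theta_0 t}\tfrac{\epsilon_0}{N}+C_0\epsilon_0^{3/2}$, and you then ``integrate in time as in Proposition 1.7'' to obtain $\|\tilde{\varphi_{\sigma_t}}\|_{C^0}\le \tfrac{3\epsilon_0}{4N}+O(\epsilon_0^{3/2})$. But the second summand does not decay, so $\int_0^t C_0\epsilon_0^{3/2}\,ds = C_0\epsilon_0^{3/2}\,t$ grows without bound; the claimed uniform $O(\epsilon_0^{3/2})$ is false, and no calibration $\epsilon_0\ll N^{-2}$ rescues a divergent integral. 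A related circularity problem underlies your ``applied equivariantly'' step: Lemma 2.4 and Corollary 2.7 are proved for the \emph{actual} flow potential $\varphi_t$ under the hypothesis $\varphi_t\in\overline{\mathcal K}(\epsilon_0)$, whereas your bootstrap hypothesis is on the gauge-fixed object $\varphi_{\sigma_t}$. If $\sigma_t$ varies with $t$, then $\varphi_{\sigma_t}$ is not a solution of (2.4) --- its evolution picks up $\dot\sigma_t$-terms --- so the energy estimates do not transfer automatically, and you have not shown why $\varphi_t$ itself stays in $\overline{\mathcal K}(\epsilon_0)$.

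The paper circumvents both issues by abandoning the continuous bootstrap in favor of a \emph{discrete restart scheme}. One runs the genuine flow (2.4) for a fixed finite time $T_N$ (Lemma 2.9 guarantees $\varphi_t\in\mathcal K(\epsilon_0)$ on $[0,T_N]$, so Corollary 2.7 applies directly), then at $t=T_N$ applies Lemma 5.1 to gauge-fix and the Implicit Function Theorem to equation (2.25) to conclude $\|\widetilde{\varphi_\sigma}\|_{C^{2,\alpha}}\le \tfrac{16}{N_0}\cdot\tfrac{\epsilon_0}{N}<\tfrac{\epsilon_0}{N}$. Crucially, one then \emph{restarts} (2.4) with $\widetilde{\varphi_\sigma}$ as the new initial potential and iterates; this produces a geometric contraction $(\tfrac{16}{N_0})^k$ over finitely many steps, with no time-integration of the $\epsilon_0^{3/2}$ remainder required. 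Once the iteration drives the data below $C_0\epsilon_0^{3/2}$, a separate dichotomy argument (Step 3) handles the residual regime. The restart is exactly what lets the energy estimates be invoked legitimately at each stage, because after each gauge-fix the object in hand is again an honest initial datum for (2.4).
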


\begin{proof}
The proof is a modification of one of Theorem 1.1.
   Let $N_0$ be a
very big number  and choose another big number $N$ with $N_0\le
N\le \frac{1}{\epsilon_0 ^{1/4}}$ such that $C_0e^{-\theta_0
T_N}\le \frac{1}{N_0}$,  where $C_0$ and $T_N$ are two uniform
numbers determined in Corollary 2.7 and Lemma 2.9, respectively.
Now we consider the solution $\varphi=\varphi_{T_N}$ of (2.4) at
time $T_N$.
 By Lemma 5.1 in Appendix 2, we see that there exists  $\sigma=\sigma_{T_N}$ such that for any
$Y\in\eta_r(M)$ with $\int_M\|Y\|^2\omega_{KS}^n=1$, it holds
 \begin{align}
|\int_M \theta_Y' \varphi_\sigma e^{\theta_X}\omega_{KS}^n|\le
O(\epsilon_0^2),
\end{align}
 where $\varphi_\sigma=\sigma^*\varphi+\rho_\sigma$. By adding a constant to  $\varphi_\sigma$
 so that $\widetilde{\varphi_\sigma}=\varphi_\sigma+const.$ satisfies $\int_M
 \widetilde{\varphi_\sigma}e^{\theta_X}\omega_{KS}^n=0$, then we can decompose  $\widetilde {\varphi_\sigma}$
 into $\widetilde{\varphi_\sigma}=\phi+\phi^\bot$ with
$\phi\in\Lambda_1(M,\omega_{KE})$ and
$\phi^\bot\in\Lambda_1^\bot(M,\omega_{KE})$, where
$\Lambda_1^\bot(M,\omega_{KS})$ is a subspace of weighted
$L^2$-integral functions   which are orthogonal to
$\Lambda_1(M,\omega_{KS})\cup\Bbb R$. Then $\phi=\sum_i
a_i\theta_i$ for some constants $a_i$, where $\theta_i$ is a basis
of the space $\Lambda_1(M,\omega_{KS})$.  As a consequence, by
(2.23), we see that $a_i=O(\epsilon_0^2)$ and so
 \beqn \|\phi\|_{C^{2,\alpha}}\le O(\epsilon_0^2).\eeqn

Since $\rho=\rho_\sigma$ satisfies equation,
 $$ \omega_\rho^n=\omega_{KS}^n e^{-\rho-X(\rho)},$$
  by equation (2.4), we have
\begin{align}
\omega_{\widetilde{\varphi_\sigma}}^n=\omega_{KS}^ne^{\sigma^*(\tilde{\dot\varphi})-\widetilde
{\varphi_\sigma}-X(\widetilde{\varphi_\sigma)}+b},
\end{align}
 where $b$ is a constant.
  Then    $\phi^\bot$ is a  solutions of equation
 $$P[\log(\frac{[\omega_{\phi+\phi^\bot}]^n}{\omega_{KS}^n})]+\varphi^\bot+X(\phi^\bot)=P[\sigma^*(\tilde{\dot\varphi})+b-X(\phi)],$$
  where   $P$ is a projection from Banach space
 $H^{2,\alpha}(M)$ to Banach space
  $H^\alpha(M)\cap \Lambda_1^\bot(M,\omega_{KS})$, and $\phi$ and $\sigma^*(\tilde{\dot\varphi})$ are regarded as two peturbation functions.
  Without of the generality, we may assume that
  $$\|\sigma^*(\tilde{\dot\varphi})\|_{C^\alpha}\ge
 C_0\epsilon_0^{\frac{3}{2}},$$
  where $C_0$ is the constant in Corollary 2.7.  Then according to Corollary 2.6 and (2.24), we have
  $$\|P[\sigma^*(\tilde{\dot\varphi})+b-X(\phi)]
 \|_{C^\alpha}=\|P[\sigma^*(\tilde{\dot\varphi})-X(\phi)]\|_{C^\alpha}\le \frac{2\epsilon_0}{NN_0}.$$
  Thus we can use  the Implicity Functional Theorem to get
\begin{align}
\|\phi^\bot\|_{C^{2,\alpha}}\le
2(\|P[\sigma^*(\tilde{\dot\varphi})+b-X(\phi)]
 \|_{C^\alpha} +  \|\phi\|_{C^{2,\alpha}})\le \frac{8\epsilon_0}{NN_0}.
\end{align}
  (2.24) and (2.26) implies
\begin{align}
\|\widetilde{\varphi_\rho}\|_{C^{2,\alpha}}\le
\frac{16\epsilon_0}{NN_0},
\end{align}
so $\widetilde{\varphi_\rho}\in\mathcal{K}(\frac{\epsilon_0}{N})$.

At the next  step (Step 2) we consider equation (2.4) with
$\widetilde{\varphi_\rho}$ as an initial potential to replace
$\psi$. By Lemma 2.9, one sees that equation is solvable for any
$t\in T_N$ with evolved K\"ahler potentials
$\varphi_t^{(2)}\in\mathcal K(\epsilon_0)$ for any $t\le T_N$. So
by the  argument at the last step (Step 1), we can also show that
there exists $\sigma^{(2)}=\sigma_{T_N}^{(2)}\in\text{Aut}_r(M)$
such that
\begin{align}\|\widetilde{\varphi_\rho^{(2)}}\|_{C^{2,\alpha}}
&=\|\widetilde{(\sigma^{(2)})^*\varphi_{T_N}^{(2)}+\rho_{\sigma^{(2)}}}\|_{C^{2,\alpha}}\notag\\
&\le (\frac{16}{N_0})^2 \frac{\epsilon_0}{N}<\frac{\epsilon_0}{N}.
\end{align}
Repeating to use the above step for finite times, we can obtain
$$\|\widetilde{\dot{\varphi^{(k)}}}\|_{C^\alpha}\le
 C_0\epsilon_0^{\frac{3}{2}}$$
  for some integer $k$.
  Then also by using the argument in Step 1,  we can find $\sigma^{(k+1)}=\sigma_{T_N}^{(k+1)}\in \text{Aut}_r(M)$
such that
 $$\|\widetilde{\varphi_\rho^{(k)}}\|_{C^{2,\alpha}}=O(\epsilon_0^{\frac{3}{2}}).$$

Now (Step 3) we  considering  equation (2.4) with
$\widetilde{\varphi_\rho^{(k)}}$ as an initial potential.  Then we
conclude that either  evolved K\"ahler potentials
$\varphi^{(k+1)}_t$ lies in $\mathcal K(\frac{\epsilon_0}{N})$ for
any $t$ or there exists some time $T$ such that
$\|\varphi^{(k+1)}_T\|_{C^{2,\alpha}}=\frac{\epsilon_0}{N}$  for
an evolved K\"ahler potential $\varphi^{(k+1)}_T$ at time $T$. If
the first case happens, then we  will finish all steps. If the
second case happens, then we can repeat the Step1-3 and we can
finally prove that there exist a family of $\sigma_t\in
\text{Aut}_r(M)$ such that (2.22) satisfies for any evolved
K\"ahler potential $\varphi_t$ of  (2.4) at $t$ as long as the
initial potential $\psi$ lies in
$\mathcal{K}(\frac{\epsilon_0}{N})$.

\end{proof}

\begin{proof}[Proof of Theorem 2.1] We suffice to do higher-order
estimates for the modified evolved K\"ahler potentials
$((\sigma_t)^*\varphi_t+\rho_t)$ of equation (2.4) in Proposition
2.10. Here we use a trick in [CT2] to choose a modified family of
holomorphic transformations $\overline \sigma_t\in\text{Aut}_r(M)$
($0< t<\infty$, $\overline\sigma_0=\text{Id}$) to replace
$\sigma_t$ such that for any $t\in (0,\infty)$ (cf. [TZ3]),
$$\|\sigma_t^{-1}\overline \sigma_t-Id\|\le C,$$
and
$$\|(\overline \sigma_t^{-1})_*\frac
{\partial\overline \sigma_t}{\partial t}\|_g\le C,$$
 where $(\overline \sigma_t^{-1})_*\frac {\partial\overline
\sigma_t}{\partial t}=\overline X_t\in \eta_r(M)$ is  a family of
holomorphic vector fields on $M$. Furthermore,  for any $k \ge 0$,
we may assume that there is a constant $C_k$ such that
$$\|\frac{\partial^k \overline X_t}{\partial t^k}\|_g\le C_k.$$
Note that the choice of  such $\overline\sigma_t$ just depends on
the $C^0$-estimate of
$\widetilde{\varphi_\sigma}=\widetilde{((\sigma_t)^*\varphi_t+\rho_t)}$.
Thus by Proposition 2.10, we also have
$(\overline\varphi-\frac{1}{V}\int_M\overline\varphi\omega_{KS}^n)\in\mathcal
K(\epsilon_0)$. On the other hand, by equation (2.4), the new
modified potential
$\overline\varphi=\varphi_{\overline\sigma_t}=(\overline\sigma_t)^*(\varphi_t+\overline\rho_t)$
will satisfy equation,
 \begin{align} &\frac{\partial\overline\varphi}{\partial
t}=\log\frac{\omega^n_{\overline\varphi}}{\omega_{KS}^n}+\overline\varphi+\overline X(\overline\varphi), \notag\\
&\overline\varphi(0)=\psi-\underline{\psi}.
\end{align}
 Now for each $t$, we can consider  solution $\varphi'$ of equation (2.29) on the interval $[t-1, t+1]$ with
 $(\overline\varphi_{t-1}-\frac{1}{V}\int_M\overline\varphi_{t-1}\omega_{KS}^n)$
 as an initial data. By the Maximal Principle, it is easy to see that both
 $\varphi_s'$ and $\dot\varphi_s'$ are uniformly bounded in $[t-1, t+1]$. Since
 $$\|\overline\varphi_s'-\frac{1}{V}\int_M\overline\varphi_s'\omega_{KS}^n\|_{C^{2,\alpha}}=
 \|\overline\varphi_s-\frac{1}{V}\int_M\overline\varphi_s\omega_{KS}^n\|_{C^{2,\alpha}},$$
 by   the regularity theory of parabolic equation,  we  get all
  bounded $C^k$-estimates for $\overline\varphi_t'$. This implies
that all $C^k$-norms of
$(\overline\varphi_t-\frac{1}{V}\int_M\overline\varphi_t\omega_{KS}^n)$
are uniformly bounded, and so are $\widetilde{\varphi_\sigma}$.

From the above estimates, we see that for any sequence of K\"ahler
metrics $\omega_{\varphi_{\sigma_i}}$, there exists a limit
K\"ahler metric $\omega_{\infty}$ of subsequence of
$\omega_{\varphi_{\sigma_i}}$ in the sense of $C^k$-convergence.
By applying Perelman's $W$-function in [Pe] to the normalized
Ricci equation (0.1), one conculdes that $\omega_{\infty}$ must be
a K\"ahler-Ricci soliton (cf. [Se]). Since the K\"ahler-Ricci
solition is  unique,  we see that there exists an element
$\tau_\infty\in \text{Aut}_0(M)$ such that
$\omega_{\infty}=\tau_\infty^*\omega_{KS}$.  By using the fact
that the convergent sequence is  arbitary,  the above implies that
there exists a family of $\tau=\tau_t\in\text{Aut}_0(M)$ such that
evolved K\"ahler metrics $\tau^*g$ converge to $g_{KS}$ smoothly.

If in addition that the initial K\"ahler potential  $\psi$ is
$K_{X_0}$-invariant, by Remark 2.7,  we can follow  the argument
in the proof of Theorem 1.1 to apply the Implicity Functional
Theorem to equation (2.25) in  the proof of Proposition 2.10  to
show that there exists a family of
$\sigma=\sigma_t\in\text{Aut}_r(M)$ such that the modified
solution $\varphi_\sigma=((\sigma_t)^*\varphi_t+\rho_t)$ of
equation (2.25) satisfy
\begin{align}
\|\widetilde{\varphi_\sigma}\|_{C^{2,\alpha}}
&=\|((\sigma_t)^*\varphi_t+\rho_t-\frac{1}{V}\int_M(\sigma_t)^*\varphi_t+\rho_t))\omega_{KS}^n)\|_{C^{2,\alpha}}\notag\\
&\le 2\|P(\sigma^*(\tilde{\dot\varphi})\|_{C^\alpha}\le
Ce^{-\theta't},~~\forall~t>0.\end{align}
 Similarly, we can also prove that for any $k$ it holds
$$ \|\widetilde{\varphi_\sigma}\|_{C^{k+2,\alpha}}\le
C_ke^{-\theta't},~~\forall~t>0,$$
 since by Remark 2.8 and the embedding theory of Sobolev spaces we
 have
$$ \|\tilde{\dot\varphi}\|_{C^{k,\alpha}}\le
C_k'e^{-\theta't},~~\forall~t>0,$$
 where $C_k$ and $C_k'$ are uniform constants which  depends
 only on $k, \epsilon_0$ and higher-order derivatives of the initial
 K\"ahler potential $\psi$. Therefore we prove that K\"ahler metrics
$\sigma^*(\omega_{\varphi})$ converge exponentially to the
K\"ahler-Ricci soliton  $\omega_{KS}$.
\end{proof}

\section {Uniqueness of  the limit of  K\"ahler Ricci flow}

By Theorem 1.1 and Theorem 2.2 in Section 1 and 2, we complete the
proof of Theorem 0.1. As an application of Theorem 0.1, we have
the following uniqueness result about the limit of K\"ahler-Ricci
flow.

\begin{theo} Let $g_t$ be the evolved K\"ahler metrics of
K\"ahler-Ricci flow (0.1) on $M$. Suppose that there exists a
sequence $g_i$ of $g_t$ and a sequence of holomorphic
transformations $\sigma_{i}\in\text{Aut}(M)$ such that
$\sigma_{i}^*g_i$ converge to a limit K\"ahler metric $g_\infty$
in the sense of $C^{2,\alpha}$-norm of K\"ahler potentials. Then
the K\"ahler-Ricci flow converges to  $g_\infty$  smoothly in the
sense of Cheeger-Gromov.
\end{theo}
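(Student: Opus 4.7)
The plan is to identify the limit $g_\infty$ as a Kähler-Einstein metric or a Kähler-Ricci soliton, and then apply the stability result Theorem 0.1 to the time-shifted flow starting from $\sigma_i^* g_{t_i}$.

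First, I would upgrade the hypothesized $C^{2,\alpha}$-convergence of potentials to smooth convergence and identify $g_\infty$. Perelman's uniform estimates along the Kähler-Ricci flow (cf.\ Lemma 1.4 and [Se]) give uniform bounds on all higher derivatives of the curvature of $g_i$, hence of $\sigma_i^* g_i$, and Hamilton's compactness theorem produces a smooth subsequential limit, which by hypothesis must agree with $g_\infty$. The monotonicity of Perelman's $\mathcal{W}$-functional applied to the normalized flow (0.1) then forces $g_\infty$ to be a gradient shrinking Kähler-Ricci soliton. In particular, $M$ admits a Kähler-Ricci soliton $(\omega_{KS}, X_0)$ (or a Kähler-Einstein metric $g_{KE}$ as the special case $X_0 = 0$), and by the uniqueness of such solitons up to $\text{Aut}_0(M)$ (cf.\ [BM], [TZ1]), after absorbing a fixed element of $\text{Aut}_0(M)$ into $\sigma_i$ we may assume $g_\infty = g_{KS}$.

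Second, I would exploit the biholomorphism-equivariance of (0.1): for any fixed $\sigma \in \text{Aut}(M)$, the family $s \mapsto \sigma^* g_{t_i + s}$ solves (0.1) with initial datum $\sigma^* g_{t_i}$, and remains in the Kähler class $2\pi c_1(M)$ since $c_1(M)$ is a topological invariant. Writing $\omega_{\sigma_i^* g_{t_i}} = \omega_{KS} + \sqrt{-1}\partial\overline\partial\psi_i$, the $C^{2,\alpha}$-convergence of potentials yields $\|\psi_i - \underline{\psi_i}\|_{C^{2,\alpha}} \to 0$. Fix $i$ large enough that this quantity falls below the $\epsilon$ of Theorem 0.1 for $g_{KS}$. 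Theorem 0.1 (in the form of Theorem 2.1) then furnishes a family $\tilde\sigma_s \in \text{Aut}_0(M)$ such that $\tilde\sigma_s^*(\sigma_i^* g_{t_i + s})$ converges smoothly to $g_{KS} = g_\infty$ as $s \to \infty$. Setting $\tau_t := \sigma_i \circ \tilde\sigma_{t - t_i} \in \text{Aut}(M)$ for $t \ge t_i$ gives $\tau_t^* g_t = \tilde\sigma_{t-t_i}^*(\sigma_i^* g_t) \to g_\infty$ smoothly, which is precisely the asserted Cheeger-Gromov convergence of the full flow.

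The main obstacle is the first step: promoting the $C^{2,\alpha}$-convergence of the modified subsequence to smooth convergence and, crucially, identifying the limit as a Kähler-Ricci soliton. This relies on Perelman's scalar curvature, diameter, and non-collapsing estimates together with the $\mathcal{W}$-monotonicity argument of Sesum-Tian; without these, the limit $g_\infty$ could a priori be any Kähler metric in $2\pi c_1(M)$ and Theorem 0.1 would have no hook. Once the soliton structure of $g_\infty$ is in hand, the remainder is a direct application of the stability statement, which is made available by the biholomorphism invariance of (0.1) and the triviality of holomorphic pullback on the Kähler class.
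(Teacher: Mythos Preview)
Your proposal is correct and follows the paper's own two-step argument essentially verbatim: identify $g_\infty$ as a K\"ahler-Ricci soliton via Perelman's $\mathcal{W}$-monotonicity, then restart the flow from $\sigma_i^* g_{t_i}$ for large $i$ and invoke Theorem~0.1 (via biholomorphism equivariance) to obtain Cheeger--Gromov convergence. The only point worth flagging is your first-step justification: Perelman's estimates (Lemma~1.4) do \emph{not} give uniform bounds on all higher curvature derivatives---only scalar curvature, $\|\nabla\dot\varphi\|$, diameter, and non-collapsing---so the smooth-convergence upgrade via Hamilton compactness is not available as stated; fortunately the paper bypasses this entirely, since the hypothesized $C^{2,\alpha}$ convergence of potentials plus the $\mathcal{W}$-argument of [Se] already pins down the soliton structure of $g_\infty$, and Theorem~0.1 then supplies the higher-order convergence a posteriori.
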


\begin{proof} First we note that by applying Perelman's $W$-function in [Pe],
the limit K\"ahler metric $g_\infty$ must be a K\"ahler-Ricci
soliton $g_{KS}$ on $M$. On the other hand, by the convergence of
$g_i$, one sees that for any $\epsilon<<1$ there exists a big
index $i$ such that the potential $\psi=\psi_i$ of $g_i$ satisfies
$$\|\psi-\underline{\psi}\|_{C^{2,\alpha}}\le \epsilon,$$
where $\omega_{\psi}=\omega_{KS}+\sqrt{-1}\p\bar\p\psi$. Now we
consider the K\"ahler-Ricci flow (0.1) with $\omega_g=\omega_\psi$
as an initial K\"ahler metric. Then by Theorem 0.1, this flow
converges to $g_{KS}$ smoothly in the sense of Cheeger-Gromov, so
the theorem is proved.

\end{proof}

\begin{rem} In a subsequence paper, we will prove the uniqueness of the limit of
K\"ahler-Ricci flow in more general.  Namely, Theorem 3.1 is still
true if we assume that there exists a sequence $g_i$ of $g_t$ of
equation (0.1) which  converge to a limit Riemanian metric
$g_\infty$ in $C^{2,\alpha}$-norm in the sense of
 Cheeger-Gromov.
\end{rem}

\section{Appendix 1}

In this appendix, we prove a lemma  about $W^{k,2}$-estimates of
$\dot\varphi$ for evolved K\"ahler metrics $\varphi$ of  flow
(1.1) under the assumption $\varphi\in\mathcal{K}(\epsilon_0)$.
Recall that a $k-norm$ $\|\nabla^k{\dot\varphi}\|^2$ is  defined
by
$$\|\nabla^k\dot\varphi\|^2=\sum
g^{i_1j_1}...g^{i_kj_k}{\dot\varphi}_{i_1...i_k}{\dot\varphi}_{j_1...j_k},$$
 where ${\dot\varphi}_{i_1...i_k}$ are components of  the $k$-covariant
 derivative of $\dot\varphi$ with respect  to
 $g=\omega_\varphi$ as a Riemannian metric.

Since
$${\dot\varphi}_{i_1...i_k}=\frac{\partial^k\dot\varphi}{\partial
x^{i_1}...\partial
x^{i_k}}+\Phi_1(\dot\varphi,...,{\dot\varphi}_{i_1...i_{k-1}}),$$
 we have
\begin{align}
\frac{ d
{\dot\varphi}_{i_1...i_k}}{dt}&=\frac{\partial^k\ddot{\varphi}}{\partial
x^{i_1}...\partial x^{i_k}}+\frac{d\Phi_1}{dt}\notag\\
&={\ddot{\varphi}}_{i_1...i_k}+\Phi_2(\dot\varphi_i,...,{\dot\varphi}_{i_1...i_{k-1}})+\frac{d\Phi_1}{dt}\notag,
\end{align}
where $\Phi_1$ and $\Phi_2$ are two polynomials with variables
$\dot\varphi_i,...,{\dot\varphi}_{i_1...i_{k-1}}$ and coefficients
$g_{ij}, \partial^l g_{ij}, l=1,...,k$.  Note that
$\frac{d\Phi_1}{dt}$ is uniformly bounded.  Then by equations
(0.1) and (1.1), one can estimate
 \begin{align}
&\frac{d\|\nabla^k \dot\varphi\|^2}{dt}\notag\\
&=\sum_{i_1,...,i_k} \sum_\alpha (R_{ i_\alpha
i_\alpha}-g_{i_\alpha
i_\alpha}){\dot\varphi}_{i_1,...,i_\alpha,...,i_k}{\dot\varphi}_{i_1,...,i_\alpha,...,i_k}\notag\\
&+ 2\sum
g^{i_1j_1}...g^{i_kj_k}\frac{d{\dot\varphi}_{i_1...i_k}}{dt}{\dot\varphi}_{j_1...j_k}\notag\\
 &\le C_1\|\nabla^k\dot\varphi\|^2+C_2\|\nabla\dot\varphi\|^2+
2(\ddot{\varphi})_{i_1...i_k}{\dot\varphi}_{j_1...j_k}\notag\\
& \le
-2\|\nabla^{k+1}\dot\varphi\|^2+C_1'\|\nabla^k\dot\varphi\|^2+C_2'\|\dot\varphi-c(t)\|^2.
\end{align}

 Let
 $$H_k(t)=\int_M\|\nabla^k\dot\varphi\|^2\omega_{\varphi}^n.$$
Then by (4.1),  we have

\begin{lem} Let $T$ be any positive number.  Suppose that $\varphi_t$ lies
$\mathcal{K}(\epsilon_0)$  for any $t\in [0,T)$. Then
 \begin{align} H_k(t)\le
Ce^{-\theta' t}, ~~\forall ~t\in [0,T).\end{align}
\end{lem}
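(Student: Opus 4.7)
My plan is to prove Lemma 4.1 by integrating the pointwise inequality (4.1) over $M$, converting the result into a scalar differential inequality for $H_k$, and then closing it by combining a Poincar\'e-type inequality at level $k$ with the exponential decay of the base energy $\mathcal{E}(t) := \int_M (\dot\varphi_t - c(t))^2 \omega_\varphi^n$ already provided by Lemma 1.3.

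First I would integrate (4.1) against $\omega_\varphi^n$, accounting for the time derivative of the volume form via $\tfrac{d}{dt}\omega_\varphi^n = \Delta_\varphi \dot\varphi \cdot \omega_\varphi^n$. Since $\varphi_t \in \mathcal{K}(\epsilon_0)$ gives a uniform $C^2$-bound on $\varphi_t$ and hence (via the PDE (1.1)) uniform control on $|\dot\varphi|$ and $|\Delta_\varphi \dot\varphi|$, the extra contribution $\int \|\nabla^k \dot\varphi\|^2 \Delta \dot\varphi\,\omega_\varphi^n$ is absorbed into a term of the form $C H_k$. The result is
\begin{equation*}
\frac{dH_k}{dt} \le -2\, H_{k+1} + A\, H_k + B\, \mathcal{E}(t),
\end{equation*}
with $A, B$ uniform constants depending only on the background geometry and $\epsilon_0$. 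Lemma 1.3 then furnishes $\mathcal{E}(t) \le \mathcal{E}(0) e^{-\theta t}$.

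Next I would establish a Poincar\'e-type estimate $H_{k+1} \ge \lambda_k H_k$ for a uniform constant $\lambda_k > 0$. For $k=0$ this is the standard Poincar\'e inequality applied to the mean-zero function $\dot\varphi_t - c(t)$, using $\lambda_1(\omega_\varphi) \ge 1$ from Lemma 1.2. For $k \ge 1$ the relevant inequality comes from the Bochner formula for the tensor $\nabla^k \dot\varphi$, together with $\mathrm{Ric}(\omega_{KE}) = \omega_{KE}$ (and hence $\mathrm{Ric}(\omega_\varphi) \ge \tfrac{1}{2}\omega_\varphi$ after an $\epsilon_0$-perturbation), so that the Weitzenb\"ock correction terms can be absorbed and one obtains $\lambda_k$ bounded below uniformly in $t$. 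Picking $\lambda_k$ large enough so that $2\lambda_k > A$, the differential inequality reduces to
\begin{equation*}
\frac{dH_k}{dt} \le -\theta_k H_k + B\, \mathcal{E}(0)\, e^{-\theta t}, \qquad \theta_k := 2\lambda_k - A > 0.
\end{equation*}
A standard Gronwall/variation-of-constants argument then gives $H_k(t) \le H_k(0) e^{-\theta_k t} + C e^{-\min(\theta, \theta_k) t}$, which is the claimed bound with $\theta' := \min(\theta, \theta_k) > 0$. For the base case of the induction one may, if preferred, take $k = 0$ and invoke Lemma 1.3 directly, proceeding upward by the same scheme.

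The main obstacle is the Poincar\'e-type inequality $H_{k+1} \ge \lambda_k H_k$ at higher orders: $\nabla^k \dot\varphi$ is a covariant $k$-tensor rather than a scalar, so classical scalar Poincar\'e does not apply, and one must iterate Bochner/Weitzenb\"ock identities while carefully tracking curvature-dependent correction terms (which are controlled by $\epsilon_0$ since $\omega_\varphi$ is a small $C^{2,\alpha}$-perturbation of $\omega_{KE}$). An alternative route that avoids this issue is to argue by induction on $k$: integrating the differential inequality at level $k-1$ in time gives an $L^1_t$-bound on $H_k$, which can then be upgraded to a pointwise exponential decay by a short-time Gronwall applied to the scalar inequality $\dot H_k \le A H_k + B \mathcal{E}$, losing a factor in the rate at each step but always keeping $\theta' > 0$; this bypasses the tensorial Poincar\'e at the cost of a less sharp constant.
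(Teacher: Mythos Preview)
Your overall strategy matches the paper's: integrate (4.1), obtain $\frac{d}{dt}H_k \le -2H_{k+1} + A\,H_k + B\,H_0$, and close using the exponential decay of $H_0$ from Lemma 1.3. The genuine gap is the sentence ``picking $\lambda_k$ large enough so that $2\lambda_k > A$.'' A tensorial Poincar\'e constant in an inequality $H_{k+1}\ge \lambda_k H_k$---even if it can be established---is a fixed spectral quantity determined by the geometry of $(M,\omega_\varphi)$; you cannot adjust it. Since $A$ arises from curvature terms in the Bochner-type computation (4.1) and grows with $k$, there is no a priori reason to have $2\lambda_k > A$, and for large $k$ this generally fails.

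The fix, implicit in the paper's passage from $-2H_{k+1}+C_3 H_k$ to $-\theta' H_k + C_4 H_0$, is the standard interpolation inequality for a mean-zero scalar function: for every $\epsilon>0$,
\[
H_k \le \epsilon\, H_{k+1} + C_\epsilon\, H_0,
\]
so that $-2H_{k+1} + A\,H_k \le -(2\epsilon^{-1}-A)H_k + 2\epsilon^{-1}C_\epsilon\,H_0$. Here $\epsilon$ \emph{is} a free parameter; take it small enough that $\theta':=2\epsilon^{-1}-A>0$, paying only a large constant in front of $H_0$. This replaces your uncontrolled $\lambda_k$ by an $H_0$-correction that is harmless because $H_0$ already decays. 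With this in hand, the paper closes by forming $H_k + A'\,H_0$ for large $A'$ and showing its logarithmic derivative is $\le -\theta'$; your variation-of-constants Gronwall is an equivalent endgame. Your alternative inductive route (time-integrating at level $k-1$ to bound $\int_t^{t+1} H_k$, then short-interval Gronwall) can also be made to work, but it is more laborious than the one-line interpolation above and does not sidestep any genuine difficulty.
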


\begin{proof} By (4.1), we have
\begin{align}
\frac{ dH_k(t)}{dt}&=\int_M
\frac{d\|\nabla^k\dot\varphi\|^2}{dt}\omega_{\varphi}^n+\int_M
\|\nabla^k\dot\varphi\|^2\triangle\dot\varphi \omega_{\varphi}^n\notag \\
&\le -2H_{k+1}(t)+C_3H_k(t)\notag+C_2'\|\dot\varphi-c(t)\|^2\\
&\le -\theta' H_{k}(t)+C_4H_0(t).
\end{align}
On the other hand, from the proof of Lemma 1.3, we in fact prove
that
$$
\frac{d H_0(t)}{dt}\le -\theta H_0(t), ~~\forall ~t\in [0,T),$$
 if $\varphi\in \mathcal{K}(\epsilon_0),~~\forall ~t\in [0,T)$.
Thus  Combining the above  inequality with (4.3), we get
$$\frac{ d(H_k(t)+AH_0(t))}{dt}\le -\theta' [H_{k}(t)+\frac{(A\theta-C_4)}{\theta'}H_0(t)],$$
  where $A$ is a sufficiently  large number.  It follows
$$\frac{ d \ln(H_k(t)+AH_0(t))}{dt}\le -\theta'\frac{ H_{k}(t)+\frac{(A\theta-C_4)}{\theta'}H_0(t)}{H_k(t)+AH_0(t)}\le -\theta'.$$
Thus
 $$ H_k(t)+AH_0(t)\le (H_k(0)+AH_0)e^{-\theta' t}$$
  and so (4.2) follows.
\end{proof}

\section{Appendix 2}

The following lemma  is about  the existence of almost
orthonormality of a K\"ahler potential to the space of first
eigenvalue-functions of operator $(P,\omega_{KS})$ defined in
Lemma 2.2 in Section 2. The  lemma  is crucial  in the proof of
Proposition 2.10.

\begin{lem}    Let $M$ be  a compact K\"ahler manifold $M$ with
$c_1(M)>0$ which admits a K\"ahler-Ricci soliton $(\omega_{KS},
X_0)$. Then for any  K\"ahler potential
$\phi\in\mathcal{K}(\epsilon_0)$ there exists a $\sigma\in
\text{Aut}_r(M)$ with bounded $\text{dist}(\sigma,Id)$  such that
for any $Y\in\eta_r(M)$ with $\int_M\|Y\|^2\omega_{KS}^n=1$, it
holds
$$ |\int_M \theta_Y'
(\sigma^*\phi+\rho_\sigma)e^{\theta_X}\omega_{KS}^n|\le
C\|X'(\phi)\|_{C^0}^2=O(\epsilon_0^2),$$
 where $\theta_Y\in\text{ker}(P,\omega_{KS})$ and
 $\rho_\sigma$ is a K\"ahler potential  defined by (2.5) in Section 2.
 \end{lem}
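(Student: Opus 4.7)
I would approach this as a quantitative implicit function theorem on the real Lie group $\text{Aut}_r(M)$, using the generalized Futaki invariant to handle the remaining central direction. Define
$$F : \text{Aut}_r(M) \longrightarrow \eta_r(M)^{*}, \qquad F(\sigma)(Y) := \int_M \theta_Y'\,\bigl(\sigma^{*}\phi + \rho_\sigma\bigr)\, e^{\theta_X}\omega_{KS}^n,$$
and aim to find $\sigma$ close to the identity with $|F(\sigma)| = O(\|X'(\phi)\|_{C^0}^2)$. When $\phi\equiv 0$ the choice $\sigma=\text{Id}$ (with $\rho_\sigma\equiv 0$) gives $F=0$, so the strategy is to perturb from there.

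The first key step is the linearization at $\text{Id}$. Differentiating $\sigma_s^{*}\omega_{KS} = \omega_{KS}+\sqrt{-1}\partial\bar\partial\rho_{\sigma_s}$ for $\sigma_s=\exp(sZ)$, $Z\in\eta_r(M)$, and the normalization in (2.5), yields $\frac{d}{ds}\big|_{s=0}\rho_{\sigma_s} = \theta_Z'$ and $\frac{d}{ds}\big|_{s=0}\sigma_s^{*}\phi = Z(\phi)$; hence
$$DF|_{\text{Id}}(Z)(Y) \;=\; \int_M \theta_Y'\,\bigl(\theta_Z' + Z(\phi)\bigr)\, e^{\theta_X}\omega_{KS}^n .$$
When $\phi\equiv 0$ this is the positive-definite pairing on $\text{ker}(P,\omega_{KS})\cong\eta_r(M)$ provided by Lemma 2.2. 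For $\phi\in\mathcal K(\epsilon_0)$ the correction $Z(\phi)$ is $O(\epsilon_0)$, so for $\epsilon_0$ small the operator remains uniformly invertible and a quantitative inverse-function theorem gives $\sigma\in\text{Aut}_r(M)$ with $\text{dist}(\sigma,\text{Id})=O(\|\phi\|_{C^{2,\alpha}})$ on which $F(\sigma)$ vanishes on the subspace of $\eta_r(M)$ transverse to $X_0$.

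The residual bound $C\|X'(\phi)\|_{C^0}^2$ in the statement reflects that $\text{Aut}_r(M)$ does not supply gauge freedom in the $K_{X_0}$-direction, so the pairing against $Y=X_0$ (and, by linearity, against any $Y$ with non-trivial component along $X_0$) must be controlled separately. Here I would invoke the vanishing of the generalized Futaki invariant $F_{X_0}(Y)=0$ applied to $\omega_{\varphi_\sigma}=\sigma^{*}\omega_\phi$, transported to $\omega_{KS}$ by the computation (2.12)--(2.13) of the excerpt: the real part of the Futaki identity gives orthogonality of $\varphi_\sigma$ to $\theta_Y'$ modulo an error controlled by $\|X'(\varphi_\sigma)\|_{C^0}^2$, and $\|X'(\varphi_\sigma)\|_{C^0}$ is equivalent to $\|X'(\phi)\|_{C^0}$ up to the bounded gauge $\sigma$. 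Combining the two mechanisms, gauge reduction in the directions transverse to $X_0$ and Futaki vanishing along $X_0$, yields the claimed $O(\|X'(\phi)\|_{C^0}^2)$ estimate.

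The main obstacle I expect is not the linear invertibility, which is immediate from Lemma 2.2, but rather the \emph{uniform} bound on $\text{dist}(\sigma,\text{Id})$: a naive implicit function theorem only gives a small $\sigma$ in a neighborhood of $\text{Id}$ of size depending on $\epsilon_0$, but one must ensure the Newton iteration stays inside this neighborhood and does not escape along the non-compact directions of $\text{Aut}_r(M)$. This is done by combining coercivity of the pairing $\int_M \theta_Y'\theta_Z' e^{\theta_X}\omega_{KS}^n$ with the a priori bound $\|\sigma^{*}\phi+\rho_\sigma-\underline{\sigma^{*}\phi+\rho_\sigma}\|_{C^0}=O(\epsilon_0)$, and with the observation that once $|F(\sigma)|$ is minimized, $\sigma$ cannot drift to infinity without forcing the pairing to grow linearly in $\text{dist}(\sigma,\text{Id})$. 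The second subtlety is carefully reducing the Futaki identity, which a priori involves integrals against $\omega_{\varphi_\sigma}^n$, back to integrals against $e^{\theta_X}\omega_{KS}^n$ while keeping track of the $\|X'(\phi)\|_{C^0}^2$ remainder rather than just an $O(\epsilon_0)$ remainder.
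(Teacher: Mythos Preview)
Your route through the implicit function theorem is genuinely different from the paper's, which is variational. The paper defines a functional $\mathcal{F}(\omega_\phi,\sigma^*\omega_{KS})$, namely the real part of a generalized $(I-J)$ energy computed along the linear path $t(-\phi+\rho_\sigma)$, proves $\mathcal{F}\ge 0$ by a direct convexity computation, and takes a near-minimizer $\sigma$ over $\text{Aut}_r(M)$; at such $\sigma$ one has $|D_Y\mathcal{F}(\sigma)|\le\epsilon$ for every $Y$, and a Stokes-formula estimate then shows that $D_Y\mathcal{F}$ differs from the orthogonality integral by at most $C\|X'(\phi)\|_{C^0}^2$. This gap is exactly the failure of path-independence of $(I-J)$ for potentials that are not $K_{X_0}$-invariant, which is why the bound is $O(\|X'(\phi)\|^2)$ rather than merely $O(\epsilon_0^2)$. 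The bound on $\text{dist}(\sigma,\text{Id})$ is extracted from the structure of $\mathcal{F}$ rather than from a Newton iteration.

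There is a genuine gap in your argument: the appeal to the generalized Futaki invariant is misplaced. The vanishing $F_{X_0}(Y)=0$ and the manipulation (2.12)--(2.13) concern the Ricci potential $h_{\omega_\phi}-\theta_X-X_0(\phi)$ (equivalently $\dot\varphi$ along the flow), not the K\"ahler potential itself; they give no information about $\int_M\theta_Y'\,\varphi_\sigma\,e^{\theta_X}\omega_{KS}^n$, and the lemma here concerns a static $\phi$ with no flow equation available. Separately, your reason for singling out the $X_0$-direction is not right: the real part $X$ of $X_0$ does move $\omega_{KS}$ (indeed $L_X\omega_{KS}=\sqrt{-1}\partial\bar\partial\theta_X\ne 0$), so your own linearization gives $DF|_{\text{Id}}(X_0)(X_0)=\int_M\theta_X^2\,e^{\theta_X}\omega_{KS}^n>0$ and the gauge is perfectly effective there. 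In fact, once one sorts out which directions in $\text{Aut}_r(M)$ act trivially on $\omega_{KS}$ (the isometry directions) versus which constraints they correspond to, the IFT approach, if carried through, ought to yield $F(\sigma)=0$ exactly, which would be stronger than the stated bound.
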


\begin{proof}This lemma was proved in [TZ1] if $\phi$ is
$K_0$-invariant. The key point in the proof  is to use a
functional defined on a space of K\"ahler-Ricci solitons
 $$\{\omega_{KS}'=\sigma^*(\omega_{KE})=\omega_{KS}+
 \sqrt{-1}\p\bar\p\rho_\sigma|~~\sigma\in\text{Aut}_r(M)\},$$
  which  was introduced  in [Zh] by
 \beqs
&&(I-J)(\omega_\phi,\omega_{KS}')\\
&&=\int_0^1 dt\int_M
\dot\phi_te^{\theta_{X_0}(\phi_t)}\omega_{\phi_t}^n -\int_M
(-\phi+\rho)e^{\theta_{X_0}+X(\rho)}(\omega_{KS}')^n,\eeqs
 where $\phi_t$ is a $K_{X_0}$-invariant path in
 $\mathcal{M}(\omega_{KS})$ which connects $0$ and $-\phi+\rho$,  and
 $\theta_{X_0}(\phi_t)$ are  potentials  of $X_0$ associated to
 metric $\omega_{\phi_t}$ defined by (2.1). It is proved in [Zh]
 that this well-defined for a $K_0$-invariant $\phi$, i.e.,  the functional is independent of
 the choice of a $K_0$-invariant path. But for a general K\"ahler potential $\phi$, one can also
 show that $(I-J)(\omega_\phi,\omega_{KS}')$ is not well-defined (to see (5.4) below),
 so  we shall introduce another functional defined on whole space $\mathcal{M}(\omega_{KS})$
 to replace it. In fact, we consider the following  functional
 \begin{align}
\mathcal{F}(\omega_\phi,\omega_{KS}')&=\text{Re}[\int_0^1 dt\int_M
(-\phi+\rho_\sigma)e^{\theta_{X_0}(t(-\phi+\rho_\sigma))}\omega_{t(-\phi+\rho_\sigma)}^n\notag\\
&-\int_M
(-\phi+\rho)e^{\theta_{X_0}'}(\omega_{KS}')^n].\end{align}
 Clearly, the definition of $\mathcal{F}$ just uses a real part of
 $(I-J)(\omega_\phi,\omega_{KS}')$ while a K\"ahler potentials path is chosen by
 $\phi_t=t(-\phi+\rho_\sigma)$. We now consider a  K\"ahler potentials path $\rho_t$
 induced by an one-parameter subgroup $\sigma_t$ generated by the
 real part of $Y\in \eta_r(M)$, i.e. $\rho_t$ are defined by
 $\omega_t=\sigma_t^*\omega_{KS}'=\omega_{KS}'+\sqrt{-1}\p\bar\p\rho_t$.
Let
 \begin{align} f_Y(t)=\text{Re}[\int_0^{1+t} ds\int_M
(\dot\phi_s)e^{\theta_{X_0}(\phi_s)}\omega_{\phi_s}^n -\int_M
(-\phi+\rho_t)e^{\theta_{X_0}(\omega_t)}\omega_t^n],
\end{align}
  where $\phi_s$ is a path in $\mathcal{M}(\omega_{KS})$ defined by
$\phi_s=s(-\phi+\rho_\sigma), ~\forall ~0\le s\le 1$ and
$\phi_s=-\phi+\rho_\sigma+\rho_t$, $1\le s\le 1+t$.  It is easy to
see
$$
 \frac{d}{dt} f_Y(t)|_{t=0}=\int_M \theta_Y'
(-\varphi+\rho_\sigma)e^{\theta_X'}(\omega_{KS}')^n.
$$
This implies
 \begin{align}
 \frac{d}{dt} f_Y(t)|_{t=0}=-\int_M \theta_Y'
((\sigma^{-1})^*\varphi+\rho_{\sigma^{-1}})e^{\theta_X}\omega_{KS}^n
\end{align}

 The gap between $ f_Y(t)$ and
$\mathcal{F}(\omega_\phi,\omega_t)$ can be computed as follows.
Let $\Delta=\{(\tau,s)|~0\le\tau\le 1,~0\le s\le
\tau+(1-\tau)(1+t)$\} be a domain in $\Bbb R^2$. Let
$\Phi=\Phi(\tau,s;\cdot)$ be K\"ahler potentials with two
parameters $(\tau, s)\in\Delta$ which satisfy:
 \begin{align}
 &\Phi=s(-\phi+\rho_\sigma+\rho_t),~~0\le s\le 1,~~~\text{as}~ \tau=1;\notag\\
 &\Phi=\phi_s, ~0\le s\le 1+t,~\text{as}~ \tau=0;\notag\\
&\Phi=0,~\text{as}~
s=0;\Phi=-\phi+\rho_\sigma+\rho_t,~s=\tau+(1-\tau)(1+t). \notag
\end{align}
 Then by using the Stoke's formula, we have
 \begin{align}&
|f_Y(t)-\mathcal{F}(\omega_\phi,\omega_t)|\notag\\
 &=|\text{Re}\{\int_{\partial \Delta} \int_M
d_{\tau,s}\Phi(\tau,s;\cdot)
e^{\theta_{X_0}(\phi_s)}\omega_{\phi_s}^n \}|\notag\\
&=|\text{Re}\{\int_\Delta d\tau ds\int_M \dot\Phi_\tau
(<\overline\partial\dot\Phi_s,\overline\partial\theta_{X_0}(\Phi)>-\notag\\
&<\overline\partial\theta_{X_0}(\Phi),\overline\partial\dot\Phi_s>)e^{\theta_{X_0}(\Phi)}\omega_{\Phi}^n\}|\notag\\
&=2|\text{Re}\{\int_\Delta d\tau ds\int_M \dot\Phi_\tau
\text{Im}(X_0(\Phi_s))e^{\theta_{X_0}(\Phi)}\omega_{\Phi}^n\}|\notag\\
&\le C\|X'(\phi)\|_{C^0}^2.
\end{align}
At the last inequality, we used a fact that  $X_0(\rho_\sigma)$
and $X_0(\rho_t)$ are both real-valued.  Similarly,  we can get
 \begin{align}|\frac{d}{dt}(f_Y(t)-\mathcal{F}(\omega_\phi,\omega_t))|_{t=0}
\le
 C\|X'(\phi)\|_{C^0}^2.\end{align}

 Next  we claim
 \begin{align}
 \mathcal{F}(\sigma)=\mathcal{F}(\omega_\phi,\omega_{KS}')\ge 0.\end{align}
 To prove the claim, we let
 \begin{align}g(t)&=\text{Re}[\int_0^t ds\int_M
(-\phi+\rho_\sigma)e^{\theta_{X_0}(s(-\phi+\rho_\sigma))}\omega_{s(-\phi+\rho_\sigma)}^n\notag\\
&-\int_M
(-\phi+\rho)e^{\theta_{X_0}(t(-\phi+\rho_\sigma))}\omega_{t(-\phi+\rho_\sigma)}^n].\notag
\end{align}
Then
$$\mathcal{F}(\sigma)=g(1)=\int_0^1 g(t)'dt.$$
On the other hand,  we have
\begin{align}  g(t)'&=\text{Re}[n\sqrt{-1}\int_M
 \partial(-\phi+\rho_\sigma)\wedge\overline\partial(-\phi+\rho_\sigma)
 e^{\theta_{X_0}(t(-\phi+\rho_\sigma))}\omega_{t(-\phi+\rho_\sigma)}^n]\notag\\
&\ge 0.\notag
\end{align}
Thus we get $g(1)\ge 0$ and prove the claim.

 By the above claim, we can take a
minimizing sequence of $\mathcal{F}(\sigma)$ in $\text{Aut}_r(M)$
and  we see that  for any small $\epsilon\le \epsilon_0$, there
exists a $\sigma\in\text{Aut}_r(M)$ with bounded
$\text{dist}(\sigma,Id)$ such that for any $Y\in\eta_r(M)$ with
$\int_M\|Y\|^2\omega_{KS}^n=1$, we have
 \begin{align} |D\mathcal{F}(\sigma)(Y)|\le \epsilon.\end{align}
 Therefore
combining (5.3), (5.5) and (5.7), we prove the lemma while
$\sigma$ is replaced by $\sigma^{-1}$.

\end{proof}

\end{document}